\documentclass[12pt]{article}
\usepackage[utf8]{inputenc}
\usepackage[english]{babel}
\usepackage[dvips]{graphicx}
\usepackage{indentfirst}
\usepackage{latexsym}
\usepackage{amsthm}
\usepackage{amsfonts}
\usepackage{amssymb}
\usepackage{amsmath}

\usepackage{euscript,bm}
\usepackage{array}
\usepackage{epsf,wrapfig}

\usepackage{hyperref}
\hypersetup{
    colorlinks=true,
    linkcolor=blue,
    filecolor=magenta,
    urlcolor=cyan,
    citecolor=green,
}

\sloppy

\theoremstyle{plain}
\newtheorem{theorem}{Theorem}

\theoremstyle{definition}

\newtheorem{example}{Example}
\newtheorem{property}{Property}  

\newtheorem{conclusion}{Conclusion}
 
\begin{document}

\noindent UDC 519.17

\title{How Trees on Atoms of Subset Algebras Define Minimal Forests and Their Growth} 
% Заголовок
\author{V.\,A. Buslov} % Автор(ы)
\begin{center}
{\bf How Trees on Atoms of Subset Algebras Define Minimal Forests and Their Growth} 
\end{center}
\begin{center}
{\large V.\,A. Buslov}
\end{center}

\begin{abstract}
A complete description is given of how minimal trees on atoms of the algebra of subsets $\mathfrak{A}_k$ generated by minimal spanning $k$-component forests of a weighted digraph $V$ determine the form of these forests and how forests grow with increasing number of arcs (that is with a decrease in the number of trees). Precise bounds are established on what can be extracted about the tree structure of the original graph if the minimal trees on the atoms of a single algebra $\mathfrak{A}_k$ are known, and also what minimum spanning forests with fewer components can be constructed based on this, and what exactly additional information is required to determine minimum spanning forests consisting of even fewer components.
\end{abstract} 

This paper is a continuation of works \cite{V6,V7,V9} and also uses the results of \cite{V8,V11}. The definitions and notations correspond to those adopted in them, up to the extension of the fonts used for different types of objects, in order to simplify their identification. All the results used are presented in the form of a list of properties.

\section{Designations and definitions} 

 \subsection{Basic definitions}

The set of vertices of an oriented graph $G$ is denoted by ${\tt V}G$, and the set of its arcs is denoted by ${\tt A}G$. 

The basic object is a weighted directed graph $V$, the vertex set of which is denoted separately for convenience by ${\tt N}$, $|{\tt N}|=N$; the arcs $(i,j)\in{\tt A}V$ are assigned real weights $v_{ij}$. We study spanning subgraphs (subgraphs with vertex set ${\tt N}$) of directed graph $V$, which are incoming (entering) forests. An entering forest (hereinafter simply forest) is an directed graph in which no more than one arc comees from each vertex and there are no contours in it. Connected components of a forest are trees. The only vertex of a tree from which an arc does not emanate is the root. 
The set of roots of a forest $F$ is denoted by ${\tt K}_F$.

By $T^F_i$ we mean the maximal subtree of forest $F$ with root at vertex $i$ relative to inclusion. If $i\in{\tt K}_F$, then $T^F_i$ is a connected component of forest $F$.

Since the work only deals with directed graphs, we use the term graph for them.

For the subgraph $G$ of the graph $V$ and the set ${\tt D}\subseteq {\tt N}$, we define the weights   
\begin{equation*}
\Upsilon^G_{\tt D}=\sum_{\begin{smallmatrix}i\in{\tt D} \\ (i,j)\in {\tt A}G\end{smallmatrix}} v_{ij} \ , \ \ \Upsilon^G=\Upsilon^G_{\tt N}=\sum_{(i,j)\in {\tt A}G} v_{ij} \ . 
\end{equation*}
Note that we specifically define the weight so that it also takes into account arcs whose entries do not belong to the set ${\tt D}$. With this definition, there is a property of weight additivity for any directed graph $H$:
\begin{equation}
\Upsilon^H_{\tt D\cup\tt D'}=\Upsilon^H_{\tt D}+ \Upsilon^H_{\tt D'}, \ {\tt D}\cap{\tt D'}=\emptyset, \ \ {\tt D},{\tt D'}\subset {\tt V}H\ .  
\end{equation}

${\cal F}^k$ is a set of spanning forests consisting of $k$ trees.
The minimum weight of $k$-component forests is denoted by
$$\phi^k=\min_{F\in{\cal
F}^k}\Upsilon^F .$$
If ${\cal F}^k=\emptyset$, we set $\phi^k=\infty$. In particular,  $\phi^0=\infty$.

$F\in\tilde{\cal F}^k$ means that $F\in{\cal F}^k$ and $\Upsilon^F=\phi^k$. We will agree to call such forests minimal.

$H=G|_{\tt D}$ is a subgraph of $G$ induced by the set ${\tt D}$, that is, ${\tt V}H={\tt D}$ and ${\tt A}H$ consists of all arcs of $G$, both ends of which belong to the set ${\tt D}$. $G|_{\tt D}$ is also called the restriction of $G$ to the set ${\tt D}$;

${\cal F}^k|_{\tt D}$ is a set of subgraphs of $k$-component spanning forests induced by the set ${\tt D}$;

$\tilde{\cal F}^k|_{\tt D}$ is a set of subgraphs of $k$-component minimum-weight spanning forests induced by the set ${\tt D}$;

$G^F_{\uparrow{\tt D}}$ is a graph obtained from $G$ by replacing  arcs emanating from the vertices of the set ${\tt D}$ with arcs emanating from the same vertices in  graph $F$;

If there is an arc whose outgoing point belongs to the set ${\tt D}$, but whose incoming point does not, then we say that the arc comes from the set ${\tt D}$. Similarly, if there is an arc whose incoming point belongs to ${\tt D}$, but whose outgoing point does not, then we say that the arc comes into ${\tt D}$. The outgoing neighborhood ${\tt N}^{out}_{\tt D}(F)$ of the set ${\tt D}$ is the set of entries of arcs coming in $F$ from the set ${\tt D}$; the incoming neighborhood is defined similarly ${\tt N}^{in}_{\tt D}(F)$. 

It is clear how the outgoing neighborhood of the set ${\tt D}$ is defined for the set ${\cal F}$ of graphs: ${\tt N}^{out}_{\tt D}({\cal F})=\underset {F\in{\cal F}}{\cup}{\tt N}^{out}_{\tt D}(F)$. The incoming neighborhood for the set of graphs is defined similarly.

For any subset ${\tt D}\subset {\tt N}$, its complement $\overline{\tt D}={\tt N}\setminus {\tt D}$.

A family \footnote{We use the term family along with the term set to avoid the phrase "set of sets".} $\mathfrak B $ of nonempty sets ${\tt B}_i$ is called a partition of ${\tt N}$ if ${\tt B}_i\cap{\tt B}_j=\emptyset$ for $i\neq j$, and ${\tt N}=\underset{i}{\cup}{\tt B}_i$. 

Let $\mathfrak{B}$ be a family of subsets of ${\tt N}$. The family $\mathfrak A$ consisting of all possible complements and intersections of these subsets is called the algebra {\it generated} by the family $\mathfrak{B}$.

A non-empty set ${\tt A}$ of an algebra $\mathfrak{A}$ is called an {\it atom} if for any element ${\tt B}$ of $\mathfrak{A}$ either ${\tt A}\cap{\tt B}=\emptyset$ or ${\tt A}\cap{\tt B}={\tt A}$. 

$\mathfrak{B}_k= \{ {\tt V}T^F_i | i\in{\tt K}_F,\  F\in\tilde{\cal F}^k \}$ is a family of vertex sets of $k$-component spanning minimal forests; $\mathfrak{A}_k$ is the algebra of subsets generated by this family; $\aleph_k$ is the family of atoms of this algebra. An atom ${\tt Z}\in\aleph_k$ is called labeled if it contains the root of at least one forest $F\in\tilde{\cal F}^k$. The family of labeled atoms is denoted by $\aleph_k^\bullet$. The remaining atoms are unlabeled, their family is denoted by $\aleph_k^\circ=\aleph_k\setminus \aleph_k^\bullet$.

\subsection{Trees on subsets}

In \cite{V8}, for any subset ${\tt D}$ of the set of all vertices ${\tt N}$, a set of special tree-like minima is defined: 

\begin{equation}
 \lambda_{\tt D}^{\bullet q}=\min_{T\in {\cal T}^{\bullet q}_{\tt D}}\Upsilon^T, \  \lambda_{\tt D}^\bullet= \min_{q\in{\tt D}} \lambda_{\tt D}^{\bullet q},  
\label{bt}
\end{equation} 
where ${\cal T}^{\bullet q}_{\tt D}$ is a set of trees with a set of vertices ${\tt D}$ and a root at the vertex $q\in {\tt D}$.  

Let us also single out subsets of trees $\tilde{\cal T}^{\bullet q}$ on which the corresponding minima are achieved: $T\in \tilde{\cal T}^{\bullet q}_{\tt D}$ $\Leftrightarrow$ $T\in {\cal T}^{\bullet q}_{\tt D}$ and $\Upsilon^T=\lambda_{\tt D}^{\bullet q}$. Let us also introduce the set ${\cal T}^\bullet_{\tt D}$ of trees with the vertex set ${\tt D}$.

Similarly, we select from it a subset $\tilde{\cal T}^\bullet_{\tt D}$ according to the rule: $T\in \tilde{\cal T}^{\bullet }_{\tt D}$ $\Leftrightarrow$ $T\in {\cal T}^{\bullet }_{\tt D}$ and $\Upsilon^T=\lambda_{\tt D}^{\bullet }$. Naturally, the following is true
\begin{equation}
\lambda_{\tt D}^\bullet=\min_{T\in {\cal T}^{\bullet}_{\tt D}}\Upsilon^T \ , \ \ {\cal T}^\bullet_{\tt D}= 
\underset{q\in{\tt D}}{\cup}{\cal T}^{\bullet q}_{\tt D} \ . 
\label{Taus}
\end{equation}

Now let arcs emanate from all vertices of the set $\tt D$ and they form a special tree. In \cite{V11} the set of trees ${\cal T}^\circ_{\tt D}$ is introduced, according to the rule: $T\in{\cal T}^\circ_{\tt D}$ means that
$|{\tt V}T|=|{\tt D}|+1$ and  $T|_{\tt D}\in{\cal T}^\bullet_{\tt D}$. Thus, in the tree $T$, exactly one arc comes from the set ${\tt D}$ itself, and it comes from the root of the induced tree $T|_{\tt D}$.

Note that for different trees $T$ and $T'$ from ${\cal T}^\circ_{\tt D}$, the sets of their vertices, generally speaking, do not coincide, since the root of the tree can be any vertex from $\overline{\tt D}$.

Let's enter the weight
\begin{equation}
\lambda_{\tt D}^\circ =\min_{T\in {\cal T}_{\tt D}^\circ}\Upsilon^{T}   , 
\label{c}
\end{equation} 
and also the subset of $\tilde{\cal T}^\circ_{\tt D}$ trees on which this minimum is achieved: $T\in\tilde{\cal T}^\circ_{\tt D}$ means that $T\in{\cal T}^\circ_{\tt D}$ and $\Upsilon^T=\lambda^\circ_{\tt D}$.

\section{Outgoing restriction }

Here it will be convenient for us to introduce another type of induced subgraphs. Since the work studies incoming forests, and in them no more than one arc comes out from each vertex, then for the selected subset of vertices ${\tt D}$ it turns out to be significant which arcs come out from the vertices of this set. The entries of these arcs may not belong to the set ${\tt D}$. Most proofs are based on replacing arcs emanating from the vertices of some set ${\tt D}$ in one graph with arcs emanating from the same vertices in another graph (the operation of arc replacement). In this connection, it is convenient to introduce the following type of induced subgraphs. By the notation $G|_{\uparrow{\tt D}}$ we will understand the graph $H$, the set of arcs of which are all arcs emanating in the graph $G$ from the vertices of the set ${\tt D}$ (and only these arcs), and the set of its vertices is the set ${\tt D}$ supplemented by the entries of arcs emanating from ${\tt D}$ in the graph $G$: ${\tt V}H={\tt D}\cup{\tt N}^{out}_{\tt D}(G)$. We will call such a graph an {\it outgoing restriction} of the graph $G$ onto the set ${\tt D}$. In this case, the arcs of the graph $G$ that come out from the vertices of the set ${\tt N}^{out}_{\tt D}(G)$ do not fall into the corresponding outgoing restriction even if their entries belong to the set ${\tt D}\cup{\tt N}^{out}_{\tt D}(G)$. 

Note that $G|_{{\uparrow}\tt D}=G|_{\tt D}$ if and only if ${\tt N}^{out}_{\tt D}(G)=\emptyset$.  

If for two graphs $G$ and $H$ we have $G|_{{\uparrow}\tt D}=H|_{{\uparrow}\tt D} $, then obviously $G|_{\tt D}=H|_{\tt D}$, but not vice versa. 

For any tree $T\in{\cal T}^\circ_{\tt D}$, 
$T|_{\uparrow{\tt D}}=T$ and by definition $T|_{\tt D}$ is a tree belonging to the set ${\cal T}^\bullet_{\tt D}$. 

If for a forest $F$ $F|_{\tt D}$ is a tree, and ${\tt K}_F\cap{\tt D}=\emptyset$ then $F|_{\uparrow{\tt D}}$ is also a tree belonging to the set ${\cal T}^\circ_{\tt D}$, since in any forest no more than one arc emanates from any vertex.  

Note also that for two graphs $G$ and $H$, their outgoing restrictions on the set ${\tt D}$ may differ not only in that the corresponding sets of arcs are different, but also in that the sets of their vertices are different, since ${\tt N}^{out}_{\tt D}(G)$, generally speaking, does not coincide with ${\tt N}^{out}_{\tt D}(H)$. This circumstance leads to the fact that if there are two sets of forests ${\cal F}$ and ${\cal F}'$, then the record 

\begin{equation}
{\cal F}|_{\uparrow{\tt D}} \subseteq{\cal F}'|_{\uparrow{\tt D}} 
\label{ff}
\end{equation}
may have different interpretations. We will understand the expression (\ref{ff}) in the following sense. For any $F\in{\cal F}$ there exists $F'\in{\cal F}'$ such that $F|_{\uparrow{\tt D}}= F'|_{\uparrow{\tt D}}$.   

Note that the main method of proving the statements \cite{V6,V7} was the operation of arc replacement. In this case, if there are two graphs $F$ and $G$, and $H=F_{\uparrow{\tt D}}^G$, then obviously $H|_{\uparrow{\tt D}}= G|_{\uparrow{\tt D}}$.   
Taking this circumstance into account, the introduced definition of outgoing restriction allows us to reformulate some provisions \cite{V7} in a more compact form, and also to give some of the statements a stronger interpretation.

\section{Properties used} 

First of all, we note the following property of the weights of trees in minimal forests.

\begin{property}\cite[Proposition 2]{V8} {\it 
Let $F\in\tilde{\cal F}^k$, $k=1,2,\cdots,N$, and ${\tt D}$ be the set of vertices of any of its trees, and vertex $q\in {\tt D}$ be the root of this tree, then
\begin{equation}
\lambda_{\tt D}^\bullet = \lambda_{\tt D}^{\bullet q} =\Upsilon^F_{\tt D}  . 
\label{mlu}
\end{equation}
}
\end{property}
The sets $\tilde{\cal T}_{\tt D}^\circ$ and $\tilde{\cal T}_{\tt D}^{\bullet}$ are the main objects of this paper. 
For the corresponding weights, on an arbitrary subset ${\tt D}\subset{\tt N}$ the following relation holds. 

\begin{property}\cite[Proposition 1]{V11}{\it Let ${\tt D}\subsetneq {\tt N}$ and ${\cal T}^\circ_{\tt D}\neq\emptyset$, then  
\begin{equation}
\lambda_{\tt D}^\circ =\min_{q\in {\tt D} }\left( \lambda_{\tt D}^{\bullet q} + \min_{r\notin{\tt D}}v_{qr}\right) . 
\label{lo}
\end{equation}}
 \end{property}
 
The calculation of the quantities $\lambda_{\tt D}^\circ$, $\lambda_{\tt D}^{\bullet}$ and $\lambda_{\tt D}^{\bullet q}$, as well as the construction of the corresponding minimal trees from $\tilde{\cal T}_{\tt D}^\circ$, $\tilde{\cal T}_{\tt D}^{\bullet}$ and $\tilde{\cal T}_{\tt D}^{\bullet q}$ is carried out using efficient algorithms \cite{V8,V10,V11}. 

We will use one simple property of the arc replacement operation \cite[Corollaries 1,2 from Lemma 1]{V6}, which we formulate more generally.

\begin{property}{\it Let $F$ and $G$ be two forests and ${\tt V}G\subseteq{\tt V}F$, and the set ${\tt D}\subseteq {\tt V}F\cap{\tt V}G$. Then the graph $F^G_{\uparrow{\tt D}}$ is a forest if any of the following holds: 

1) ${\tt N}^{in}_{\tt D}(F)=\emptyset$;  

2) ${\tt N}^{out}_{\tt D}(G)=\emptyset$. 
}
\end{property}

In \cite{V6} the concept of relatedness of two spanning forests with different numbers of roots was also introduced. 

A forest $F\in{\cal F}^{k+1}$ is called an ancestor of a forest $R\in{\cal F}^{k}$, and $R$ is a descendant of $F$ if the following conditions are satisfied. There exist two roots $x,y$ of forest $F$ such that: ${\tt K}_R={\tt K}_F\setminus \{ y \}$; $T^R_q=T^F_q$ for $q\in{\tt K}_R\setminus \{ x\}$; $R|_{{\tt V}T^F_x}=T^F_x$; $R|_{{\tt V}T^F_y}$ is a tree. Forests $F$ and $R$ are called related.

The main theorem proved there indicates the minimum possible changes that must be made to a minimum spanning forest with one number of trees to obtain a minimum spanning forest with a number of trees that differs by one:

\begin{property} \cite[Theorem 2 (on related forests)]{V6}{\it Let ${\cal F}^{k}\neq \emptyset$, then any forest in $\tilde{\cal F}^{k+1}$ has a descendant in $\tilde{\cal F}^{k}$ and vice versa --- any forest in $\tilde{\cal F}^{k}$ has an ancestor in $\tilde{\cal F}^{k+1}$.} 
\end{property} 
It is this property that forms the basis for creating efficient algorithms for constructing minimum spanning forests. \cite{V8,V10,V11}. 

Note that if in $F\in\tilde{\cal F}^{k+1}$ and from some of its vertices an arc emanates, then in any of its descendants $R\in \tilde{\cal F}^k$ an arc also emanates from this vertex.
  
We will assume that the original graph $V$ is sufficiently dense, in the sense that there is at least one spanning tree, that is, the set ${\cal F}^1$ of spanning forests consisting of one tree is not empty. Then the sets ${\cal F}^k$, $k\in \{1, 2, \ldots, N\}$ are not empty either.

It follows from Property 4 that 
\begin{property}\cite[Theorem 1]{V7}{\it The sequence of algebras $\mathfrak{A}_k$ is increasing
$$\{{\tt N},\emptyset\}=\mathfrak{A}_1 \subseteq \mathfrak{A}_2
\subseteq \cdots \subseteq \mathfrak{A}_{N-1} \subseteq \mathfrak{A}_N \ =2^{\tt N} ,$$ where $2^{\tt N}$ is the family of all subsets
of the set ${\tt N}$.}
\end{property}

Not all algebras are different and this depends on how many times in the system of convexity inequalities \cite{V6,V} 

\begin{equation}
\phi^{k-1}-\phi^k\ge\phi^k- \phi^{k+1} \label{convex}
\end{equation}
for different $k$ the equality sign and the strict inequality sign \cite[Theorem 1]{V6} occur.

\begin{property}\cite[Theorem 2 and Corollary 1]{V7}{\it
Let 
\begin{equation}
\phi^{k-1}-\phi^k =\phi^k- \phi^{k+1} , \label{equal}
\end{equation}
then
$\mathfrak{A}_k=\mathfrak{A}_{k+1}$, $\aleph_k^\bullet=\aleph_{k+1}^\bullet$ and
\begin{equation}
\tilde{\cal F}^{k-1}|_{{\uparrow}{\tt Y}}\subseteq \tilde{\cal F}^k|_{\uparrow{\tt Y}}
\supseteq\tilde{\cal F}^{k+1}|_{\uparrow{\tt Y}} \ ,  \ \  {\tt Y}\in\aleph_k  .  
\label{al}
\end{equation}}
Here the expression (\ref{al}) is given in a strengthened form compared to the original formulation. It is valid, since the proof of the corresponding theorem relied on the replacement of arcs emanating from the vertices of a certain set (whose entries might not belong to it).  
\end{property}

If in the system of convexity inequalities for some $k$ there is a strict inequality
\begin{equation}
\phi^{k-1}-\phi^k >\phi^k- \phi^{k+1} , \label{nonequal}
\end{equation}
then a number of properties are satisfied.

\begin{property}\cite[Claim 3]{V7} {\it 
Let (\ref{nonequal}) be satisfied, then the same number of arcs emanate from the vertices of any set ${\tt A}\in\mathfrak{A}_k$ in all forests from $\tilde{\cal F}^k$.}
\end{property}

\begin{property}\cite[Theorem 4]{V7} {\it 
Let (\ref{nonequal}) hold, then $|\aleph_k^\bullet|=k$.}
\end{property}

\begin{property}\cite[Claim 4]{V7} {\it 
Let (\ref{nonequal}) hold, then any tree of the forest $F\in\tilde{\cal F}^k$ contains exactly one labeled atom of the algebra $\mathfrak{A}^k$.}
\end{property}

\begin{property}\cite[Claim 6]{V7} {\it 
Let (\ref{nonequal}) hold, $F\in\tilde{\cal F}^k$, ${\tt Y}\in \aleph_k$, then there exists a forest $H\in\tilde{\cal F}^k$ such that $H|_{\uparrow{\tt Y}}=F|_{\uparrow{\tt Y}}$ and ${\tt N}^{in}_{\tt Y}(H)=\emptyset$. }
\end{property}

\begin{property}\cite[Theorem 5]{V7} {\it 
Let (\ref{nonequal}) hold and ${\tt A}\in\mathfrak{A}_k$, then the value of $\Upsilon^F_{\tt A}$ is the same for all $F\in\tilde{\cal F}^k$.}
\end{property}

\begin{property}\cite[Theorem 7]{V7} {\it 
Let (\ref{nonequal}) hold, $ F\in\tilde{\cal F}^k$, ${\tt M}\in \aleph_k^\bullet $, then 
$F|_{\uparrow{\tt Z}}=F|_{\tt Z}$ is a tree.}
\end{property}
Here we specifically noted that arcs do not emanate from the labeled atom, which was not included in the original formulation but was present in the corresponding proof.

\begin{property}\cite[Theorem 8]{V7} {\it 
Let (\ref{nonequal}) hold and $ F\in\tilde{\cal
F}^{k-1}$, then there exists a forest $P\in\tilde{\cal F}^{k}$ and a ${\tt Z}\in\aleph_k^\bullet$ such that $P|_{\uparrow{\overline{\tt Z}}}=F|_{\uparrow{\overline{\tt Z}}}$, and $F|_{\tt Z}$ is a tree.}
\end{property}

\begin{property}\cite[Corollary 3 from Theorem 8]{V7} {\it 
Let (\ref{nonequal}) hold. Then   
  
\begin{equation}
\tilde{\cal F}^{k-1}|_{\uparrow{\tt X}}\subseteq \tilde{\cal F}^k|_{\uparrow{\tt X}} \ ,  \ \  {\tt X}\in\aleph_k^\circ. 
\label{uat}
\end{equation}}
\end{property}
The formulation of this property is also given using the outgoing narrowing, that is, in a strengthened form (which was actually proven), compared to the original.

Finally, whether the convexity inequalities (\ref{convex}) are strict or non-strict, it holds that
\begin{property}\cite[Theorem 5,6]{V9} {\it Let $F\in\tilde{\cal F}^k \cup \tilde{\cal F}^{k-1}$, ${\tt Y}\in\aleph_k$, 
then $F|_{\tt Y}$ is a tree.} 
\end{property}

This property is the most important. Its proof is quite non-trivial and a separate paper \cite{V9} is devoted to it. It is this property that allows us to reduce the construction of minimal forests to the study of trees from the sets $\tilde{\cal T}^{\bullet}_{\tt Y}$, $\tilde{\cal T}^{\circ}_{\tt Y}$ on the atoms ${\tt Y}$ of the algebras $\mathfrak{A}_k$.

\section{How Trees Determine Forest Growth}
 
Summarizing the listed properties, we obtain the following picture.

First, note that, whether the convexity inequality (\ref{convex}) is strict or not, if ${\tt X}$ is an unlabeled atom, then there exist at least two atoms ${\tt Y},{\tt Y}'\in\aleph_k$ and at least two forests $F, F'\in\tilde{\cal F}^k$ such that in forest $F$, an arc outgoing from ${\tt X}$ enters ${\tt Y}$, and in forest $F'$, an arc outgoing from ${\tt X}$ enters ${\tt Y}'$. Moreover, there are at least two forests of $\tilde{\cal F}^k$ in which ${\tt X}$ is in trees rooted at distinct labeled atoms of the family $\aleph_k^\bullet$ (the converse would contradict ${\tt X}$ being an atom).

Further, in the forest $F\in\tilde{\cal F}^l$, $l\leq k$, from each vertex of the unlabeled atom ${\tt X}\in\aleph_k^\circ$ one arc emanates. If $F\in\tilde{\cal F}^k\cup \tilde{\cal F}^{k-1}$ from the set ${\tt X}$ itself there is exactly one arc emanating, and it is valid $F|_{\uparrow{\tt X}}\in{\cal T}^\circ_{\tt X}$.

Below are a number of simple theorems on the relationship between the properties of minimal trees on atoms of algebras of subsets of $\mathfrak{A}_k$ and the properties of minimal spanning forests. Together, these theorems and the examples given completely exhaust all possible variants.

\subsection{In convexity inequalities, one sign of strict inequality is known}

Now let the convexity inequalities have the strict inequality sign. First of all, note that when (\ref{nonequal}) holds, the arcs of the labeled atoms of the algebra $\mathfrak{A}_k$ do not emanate in any forest from $\tilde{\cal F}^k$.

\begin{theorem}
Let (\ref{nonequal}) hold and ${\tt Z}\in \aleph_k^\bullet$. Then $\tilde{\cal F}^k|_{\uparrow{\tt Z}}=\tilde{\cal F}^k|_{\tt Z} =\tilde{\cal T}^\bullet_{\tt Z}$. 
 
\end{theorem}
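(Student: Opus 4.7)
The plan is to prove the two equalities in order. The first, $\tilde{\cal F}^k|_{\uparrow{\tt Z}} = \tilde{\cal F}^k|_{\tt Z}$, is immediate from Property 12: that property asserts $F|_{\uparrow{\tt Z}} = F|_{\tt Z}$ already as a single graph for every $F\in\tilde{\cal F}^k$, so passing to the family-level interpretation in the sense of (\ref{ff}) changes nothing.

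The substantive content is $\tilde{\cal F}^k|_{\tt Z} = \tilde{\cal T}^\bullet_{\tt Z}$, which I would prove by a symmetric arc-replacement argument. First I would pin down the root structure: for every $F\in\tilde{\cal F}^k$, Properties 8 and 9 together give a bijection between the $k$ labeled atoms and the $k$ trees of $F$, so ${\tt Z}$ meets exactly one tree of $F$, and by Property 12 (no arc leaves ${\tt Z}$) the root of that tree must itself lie in ${\tt Z}$. Hence $F|_{\tt Z}$ is a tree on ${\tt Z}$ with some root $q_F\in{\tt Z}$, and in particular $F|_{\tt Z}\in{\cal T}^\bullet_{\tt Z}$.

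For $\tilde{\cal F}^k|_{\tt Z}\subseteq\tilde{\cal T}^\bullet_{\tt Z}$, fix $F\in\tilde{\cal F}^k$, take any competitor $T'\in\tilde{\cal T}^\bullet_{\tt Z}$, and form $F' = F^{T'}_{\uparrow{\tt Z}}$. All arcs of $T'$ lie inside ${\tt Z}$, so ${\tt N}^{out}_{\tt Z}(T') = \emptyset$, and Property 3 (case 2) makes $F'$ a forest; a root count (nothing outside ${\tt Z}$ changes, and inside ${\tt Z}$ the old root $q_F$ is swapped for the root of $T'$) gives $F'\in{\cal F}^k$. Because both the arcs removed and the arcs inserted lie inside ${\tt Z}$, $\Upsilon^{F'} = \Upsilon^F - \Upsilon^{F|_{\tt Z}} + \Upsilon^{T'}$, and $\Upsilon^{F'}\ge\phi^k=\Upsilon^F$ forces $\Upsilon^{F|_{\tt Z}}\le\Upsilon^{T'} = \lambda_{\tt Z}^\bullet$. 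Combined with the trivial lower bound from the definition of $\lambda_{\tt Z}^\bullet$ applied to the tree $F|_{\tt Z}\in{\cal T}^\bullet_{\tt Z}$, this gives $F|_{\tt Z}\in\tilde{\cal T}^\bullet_{\tt Z}$. The reverse inclusion $\tilde{\cal T}^\bullet_{\tt Z}\subseteq\tilde{\cal F}^k|_{\tt Z}$ uses the identical construction: when $\Upsilon^{T'} = \lambda_{\tt Z}^\bullet = \Upsilon^{F|_{\tt Z}}$, the weight identity above gives $\Upsilon^{F'} = \Upsilon^F$, so $F'\in\tilde{\cal F}^k$ with $F'|_{\tt Z} = T'$, exhibiting $T'\in\tilde{\cal F}^k|_{\tt Z}$.

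The main obstacle is the root bookkeeping: one must check that ${\tt Z}$ hosts exactly one root of every $F\in\tilde{\cal F}^k$, so that exchanging its tree-on-${\tt Z}$ piece neither creates nor destroys a component of the global forest. Once that is secured from Properties 8, 9, and 12, everything else is a mechanical combination of Property 3 with the additivity of $\Upsilon$ across disjoint vertex sets.
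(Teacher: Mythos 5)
Your proof is correct, and it reaches the statement by a slightly different route than the paper. The paper first invokes Property 10 to replace $F$ by a forest $H\in\tilde{\cal F}^k$ with $H|_{\uparrow{\tt Z}}=F|_{\uparrow{\tt Z}}$ and ${\tt N}^{in}_{\tt Z}(H)=\emptyset$; combined with Property 12 this makes ${\tt Z}$ the exact vertex set of a tree of $H$, so the inclusion $\tilde{\cal F}^k|_{\tt Z}\subseteq\tilde{\cal T}^\bullet_{\tt Z}$ falls out of the cited Property 1 ($\Upsilon^H_{\tt Z}=\lambda^\bullet_{\tt Z}$), and only the reverse inclusion is handled by arc replacement. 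You instead dispense with Properties 1 and 10 altogether and run a two-sided exchange argument directly on $F$: the same substitution $F^{T'}_{\uparrow{\tt Z}}$ simultaneously yields $\Upsilon^{F|_{\tt Z}}\le\lambda^\bullet_{\tt Z}$ from minimality of $F$ and, once equality is forced, exhibits $T'$ as a restriction of a minimal forest. The price is that you must do the root bookkeeping yourself, but Property 12 already gives you all of it for free ($F|_{\uparrow{\tt Z}}=F|_{\tt Z}$ being a tree means exactly one vertex of ${\tt Z}$ is rootless inside ${\tt Z}$ and that vertex is a root of $F$), so your appeal to Properties 8 and 9 is harmless but redundant. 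Your version is somewhat more self-contained; the paper's version leans on its toolbox to shorten the first inclusion and, by working with $H$, keeps the pattern uniform with the proofs of Theorems 2 and 4, where the ${\tt N}^{in}_{\tt Z}(H)=\emptyset$ normalization is genuinely needed.
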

\begin{proof}
Let $F\in\tilde{\cal F}^k$. By Property 10, there exists a forest $H\in\tilde{\cal F}^k$ such that $H|_{\uparrow{\tt Z}}=F|_{\uparrow{\tt Z}}$ and ${\tt N}^{in}_{\tt Z}(H)=\emptyset$. By Property 12, $F|_{\uparrow{\tt Z}}=F|_{\tt Z}$. Thus, ${\tt Z}$ is the vertex set of one of the trees in forest $H$. Then by Property 1 $H|_{\tt Z}\in\tilde{\cal T}^\bullet_{\tt Z}$ ($\Upsilon^H_{\tt Z}=\lambda^\bullet_{\tt Z}$), and hence $F|_{\tt Z}\in\tilde{\cal T}^\bullet_{\tt Z}$. 
 
 Now let $T\in \tilde{\cal T}^\bullet_{\tt Z}$. By Property 3, the graph $G=H^T_{\uparrow{\tt Z}}$ is a forest. It has the same number of trees as $H$, since ${\tt Z}$ is the vertex set of one of the trees in $H$. Therefore, $G\in{\cal F}^k$. Since $H$ is a minimal forest, $\Upsilon^G-\Upsilon^H \geq 0$. Given that $G|_{\uparrow{\tt Z}}=G|_{\tt Z}=T$, and $\Upsilon^T=\lambda^\bullet_{\tt Z}$, we have

\begin{equation}
0\leq \Upsilon^G-\Upsilon^H=\Upsilon^G_{\tt Z}-\Upsilon^H_{\tt Z}= \lambda^\bullet_{\tt Z}-\lambda^\bullet_{\tt Z}=0 \ .
\label{lg}
\end{equation} 
Thus $G\in\tilde{\cal F}^k$.
\end{proof}

Unlabeled atoms have a similar property.

\begin{theorem} Let (\ref{nonequal}) be satisfied and ${\tt X}\in\aleph^\circ_k$. 
Then $\tilde{\cal F}^k|_{\uparrow{\tt X}}=\tilde{\cal T}^\circ_{\tt X}$. 
\end{theorem}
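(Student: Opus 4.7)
The strategy is to mirror the proof of Theorem~1, establishing the two inclusions separately by arc replacement; the twist is that ${\tt X}$ is no longer a whole tree of a forest $F\in\tilde{\cal F}^k$, but only a subtree with a single outgoing connection.

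First I would prove $\tilde{\cal F}^k|_{\uparrow{\tt X}}\subseteq\tilde{\cal T}^\circ_{\tt X}$. Let $F\in\tilde{\cal F}^k$. By Property~15, $F|_{\tt X}$ is a tree, and since ${\tt X}\in\aleph_k^\circ$ is unlabeled we have ${\tt K}_F\cap{\tt X}=\emptyset$; the observation in Section~2 on outgoing restrictions (restated just before Subsection~4.1) then yields $F|_{\uparrow{\tt X}}\in{\cal T}^\circ_{\tt X}$. To upgrade membership to $\tilde{\cal T}^\circ_{\tt X}$ I would apply Property~10 to obtain $H\in\tilde{\cal F}^k$ with $H|_{\uparrow{\tt X}}=F|_{\uparrow{\tt X}}$ and ${\tt N}^{in}_{\tt X}(H)=\emptyset$, pick an arbitrary $T\in\tilde{\cal T}^\circ_{\tt X}$, and set $G=H^T_{\uparrow{\tt X}}$. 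Property~3 (case~1) ensures $G$ is a forest; since ${\tt X}$ is unlabeled, exactly $|{\tt X}|$ arcs leave ${\tt X}$ in $H$, and by the definition of ${\cal T}^\circ_{\tt X}$ exactly $|{\tt X}|$ arcs also leave ${\tt X}$ in $T$, so $G$ has the same number of arcs as $H$ and thus $G\in{\cal F}^k$. Minimality of $H$ now yields
$$0\le\Upsilon^G-\Upsilon^H=\Upsilon^G_{\tt X}-\Upsilon^H_{\tt X}=\lambda^\circ_{\tt X}-\Upsilon^{F|_{\uparrow{\tt X}}},$$
which together with $\Upsilon^{F|_{\uparrow{\tt X}}}\ge\lambda^\circ_{\tt X}$ forces $\Upsilon^{F|_{\uparrow{\tt X}}}=\lambda^\circ_{\tt X}$.

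For the reverse inclusion $\tilde{\cal T}^\circ_{\tt X}\subseteq\tilde{\cal F}^k|_{\uparrow{\tt X}}$ I would take $T\in\tilde{\cal T}^\circ_{\tt X}$, again apply Property~10 (starting from any forest in $\tilde{\cal F}^k$) to secure an $H\in\tilde{\cal F}^k$ with ${\tt N}^{in}_{\tt X}(H)=\emptyset$, and form the same $G=H^T_{\uparrow{\tt X}}$. The same arc count gives $G\in{\cal F}^k$, while
$$\Upsilon^G-\Upsilon^H=\Upsilon^T-\Upsilon^{H|_{\uparrow{\tt X}}}=\lambda^\circ_{\tt X}-\lambda^\circ_{\tt X}=0,$$
the second equality using the forward inclusion just proved, applied to $H$. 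Hence $G\in\tilde{\cal F}^k$ with $G|_{\uparrow{\tt X}}=T$, as required.

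The only delicate point is verifying that $G\in{\cal F}^k$ after the replacement: since both $H|_{\uparrow{\tt X}}$ and $T$ lie in ${\cal T}^\circ_{\tt X}$, each contributes exactly $|{\tt X}|$ arcs emanating from ${\tt X}$, so the total arc count (and thus the number of components) is preserved under the replacement. Everything else is a faithful adaptation of the Theorem~1 argument, with Property~15 playing the role that Property~12 played there.
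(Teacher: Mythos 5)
Your proposal is correct and follows essentially the same route as the paper's proof: Property 15 plus the unlabeled-atom observation to place $F|_{\uparrow{\tt X}}$ in ${\cal T}^\circ_{\tt X}$, Property 10 to clear the incoming neighborhood, Property 3 for the arc replacement $G=H^T_{\uparrow{\tt X}}$, the arc-count argument to keep $G$ in ${\cal F}^k$, and the weight comparison to force $\Upsilon^F_{\tt X}=\lambda^\circ_{\tt X}$. The only cosmetic difference is that you split the two inclusions into separate passes, whereas the paper extracts both conclusions from a single instance of the same construction.
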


\begin{proof}
Let $T\in \tilde{\cal T}^\circ_{\tt X}$ and $F\in\tilde{\cal F}^k$. By Property 10, there is a forest $H\in\tilde{\cal F}^k$ such that $H|_{\uparrow{\tt X}}=F|_{\uparrow{\tt X}}$ and ${\tt N}^{in}_{\tt X}(H)=\emptyset$. By Property 3, the graph $G=H^T_{\uparrow{\tt X}}$ is a forest. In both $H$ and $T$, arcs emanate from all vertices of ${\tt X}$. Therefore, the number of trees in $G$ is the same as in $H$, and hence $G\in{\cal F}^k$. 
We have 

\begin{equation}
0\leq \Upsilon^G-\Upsilon^H=\Upsilon^G_{\tt X}-\Upsilon^H_{\tt X}=\Upsilon^T-\Upsilon^F_{\tt X} =\lambda^\circ_{\tt X}-\Upsilon^H_{\tt X} .
\label{lg1}
\end{equation} 
By Property 15, $F|_{\tt X}$ is a tree, so $F|_{\uparrow{\tt X}}\in{\cal T}^\circ_{\tt X}$, that is, $H|_{\uparrow{\tt X}}\in{\cal T}^\circ_{\tt X}$. Hence $\Upsilon^H_{\tt X}\geq \lambda^\circ_{\tt X}$. Then it follows from (\ref{lg1}) that $\Upsilon^H_{\tt X} = \lambda^\circ_{\tt X}$ and that $G\in\tilde{\cal F}^k$. Moreover, $H|_{\uparrow{\tt X}}=F|_{\uparrow{\tt X}}$. Therefore $F|_{\uparrow{\tt X}}\in\tilde{\cal T}^\circ_{\tt X}$ and by construction $G|_{\uparrow{\tt X}}=T$.
\end{proof}

Let us now turn to forests from $\tilde{\cal F}^{k-1}$. The simplest situation is with unlabeled atoms of the algebra $\mathfrak{A}_k$.

\begin{theorem}
Let (\ref{nonequal}) hold and ${\tt X}\in\aleph^\circ_k$. Then $\tilde{\cal F}^{k-1}|_{\uparrow{\tt X}}\subseteq \tilde{\cal T}^\circ_{\tt X}$.
\end{theorem}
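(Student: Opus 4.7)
The plan is to derive this statement as essentially an immediate corollary of Property 14 combined with the just-proved Theorem 2. Property 14 states exactly that under (\ref{nonequal}), for any unlabeled atom ${\tt X} \in \aleph_k^\circ$, we have $\tilde{\cal F}^{k-1}|_{\uparrow{\tt X}} \subseteq \tilde{\cal F}^k|_{\uparrow{\tt X}}$. Theorem 2 then identifies the right-hand side: $\tilde{\cal F}^k|_{\uparrow{\tt X}} = \tilde{\cal T}^\circ_{\tt X}$. Chaining these two gives the desired inclusion.

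In more detail, I would start by taking an arbitrary $F \in \tilde{\cal F}^{k-1}$ and noting, using the structural observation made at the opening of Section 4 (which relies on Property 15 and the fact that ${\tt X}$ is unlabeled), that $F|_{\uparrow{\tt X}} \in {\cal T}^\circ_{\tt X}$: every vertex of ${\tt X}$ emits an arc in $F$, and $F|_{\tt X}$ is a tree, so exactly one of these outgoing arcs leaves the set ${\tt X}$. By Property 14 there exists a forest $H \in \tilde{\cal F}^k$ with $H|_{\uparrow{\tt X}} = F|_{\uparrow{\tt X}}$. By Theorem 2, $H|_{\uparrow{\tt X}} \in \tilde{\cal T}^\circ_{\tt X}$. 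Therefore $F|_{\uparrow{\tt X}} \in \tilde{\cal T}^\circ_{\tt X}$, which is exactly the statement to be proven under our convention for (\ref{ff}).

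There is no real obstacle here; the content is packaged into the earlier results. The only thing to be careful about is the interpretation of the inclusion symbol between sets of outgoing restrictions, as explained around equation (\ref{ff}) of the paper — we must verify that we are matching each $F \in \tilde{\cal F}^{k-1}$ with an element of $\tilde{\cal T}^\circ_{\tt X}$ having the same outgoing restriction on ${\tt X}$, not merely the same restriction. This is automatic because Property 14 is itself stated in the strengthened outgoing-restriction form, and Theorem 2 was proved in that same form.
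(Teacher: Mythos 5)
Your proof is correct and is essentially identical to the paper's own argument: the paper likewise chains Property 14 ($\tilde{\cal F}^{k-1}|_{\uparrow{\tt X}}\subseteq \tilde{\cal F}^k|_{\uparrow{\tt X}}$) with Theorem 2 ($\tilde{\cal F}^k|_{\uparrow{\tt X}}=\tilde{\cal T}^\circ_{\tt X}$). Your extra care about the outgoing-restriction interpretation of the inclusion is sound but not needed beyond what those two results already supply.
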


\begin{proof}
Indeed, by Property 14, $\tilde{\cal F}^{k-1}|_{\uparrow{\tt X}}\subseteq \tilde{\cal F}^k|_{\uparrow{\tt X}}$. By Theorem 2, $\tilde{\cal F}^k|_{\uparrow{\tt X}}=\tilde{\cal T}^\circ_{\tt X}$. 
\end{proof}

Thus, the distribution of arcs emanating from the vertices of any unlabeled atom in forests of $\tilde{\cal F}^{k-1}$ has nothing new compared to the distribution of arcs emanating from these vertices in forests of $\tilde{\cal F}^{k}$. And even the variability may decrease. 

For labeled atoms of the algebra $\mathfrak{A}_k$ the situation is much more diverse. 
 
\begin{theorem} Let (\ref{nonequal}) hold and $F\in\tilde{\cal F}^{k-1}$. Then there exists a unique labeled atom ${\tt Z}\in\aleph^\bullet_k$ such that ${\tt N}^{out}_{\tt Z}(F)\neq \emptyset$. For this atom, $F|_{\uparrow{\tt Z}}\in\tilde{\cal T}^\circ_{\tt Z}$. Moreover, for any $T\in\tilde{\cal T}^\circ_{\tt Z}$, there exists $G\in\tilde{\cal F}^{k-1}$ such that $G|_{\uparrow{\tt Z}}=T$.  
\end{theorem}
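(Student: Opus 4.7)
The plan is to invoke Property 14 to select a forest $P\in\tilde{\cal F}^k$ and a labeled atom ${\tt Z}\in\aleph_k^\bullet$ with $P|_{\uparrow{\overline{\tt Z}}}=F|_{\uparrow{\overline{\tt Z}}}$ and $F|_{\tt Z}$ a tree, and then to prove every remaining assertion around this specific ${\tt Z}$. For uniqueness I would argue by root counting. Each of the $k-1$ labeled atoms ${\tt M}\neq{\tt Z}$ lies in $\overline{\tt Z}$, and Property 13 applied to $P$ says that no arc emanates from ${\tt M}$ in $P$; since $F|_{\uparrow{\overline{\tt Z}}}=P|_{\uparrow{\overline{\tt Z}}}$, no arc emanates from ${\tt M}$ in $F$ either, so the root of the $P$-tree containing ${\tt M}$ is still a root of $F$. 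This accounts for all $k-1$ roots of $F$, so ${\tt Z}$ contains none. Property 15 then makes $F|_{\tt Z}$ a tree whose root in ${\tt Z}$ is not a root of $F$; the unique arc of $F$ leaving that root must go outside ${\tt Z}$, so ${\tt N}^{out}_{\tt Z}(F)\neq\emptyset$, ${\tt Z}$ is the unique labeled atom with this property, and $F|_{\uparrow{\tt Z}}\in{\cal T}^\circ_{\tt Z}$.

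To obtain $F|_{\uparrow{\tt Z}}\in\tilde{\cal T}^\circ_{\tt Z}$ and the existence of $G$ for a given $T\in\tilde{\cal T}^\circ_{\tt Z}$, I would close a pair of inequalities relating $\lambda^\circ_{\tt Z}-\lambda^\bullet_{\tt Z}$ and $\phi^{k-1}-\phi^k$. Apply Property 10 to $P$ to produce $H\in\tilde{\cal F}^k$ with $H|_{\uparrow{\tt Z}}=P|_{\uparrow{\tt Z}}=P|_{\tt Z}$ and ${\tt N}^{in}_{\tt Z}(H)=\emptyset$, so that $H|_{\tt Z}=P|_{\tt Z}\in\tilde{\cal T}^\bullet_{\tt Z}$ is exactly one tree of $H$. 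Form $G=H^T_{\uparrow{\tt Z}}$; case 1 of Property 3 makes $G$ a forest, and since $T$ contributes one arc out of ${\tt Z}$ where $H$ had none, $G\in{\cal F}^{k-1}$ with $\Upsilon^G-\phi^k=\lambda^\circ_{\tt Z}-\lambda^\bullet_{\tt Z}$, giving $\lambda^\circ_{\tt Z}-\lambda^\bullet_{\tt Z}\geq\phi^{k-1}-\phi^k$. For the reverse, decompose $\phi^{k-1}=\Upsilon^F_{\tt Z}+\Upsilon^F_{\overline{\tt Z}}$ and use $F|_{\uparrow{\overline{\tt Z}}}=P|_{\uparrow{\overline{\tt Z}}}$ together with Property 1 for $P$ to get $\Upsilon^F_{\overline{\tt Z}}=\phi^k-\lambda^\bullet_{\tt Z}$, hence $\Upsilon^F_{\tt Z}=\phi^{k-1}-\phi^k+\lambda^\bullet_{\tt Z}$. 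Combined with $\Upsilon^F_{\tt Z}\geq\lambda^\circ_{\tt Z}$ (from $F|_{\uparrow{\tt Z}}\in{\cal T}^\circ_{\tt Z}$, already established), this forces $\phi^{k-1}-\phi^k\geq\lambda^\circ_{\tt Z}-\lambda^\bullet_{\tt Z}$. Equality throughout then delivers both $\Upsilon^F_{\tt Z}=\lambda^\circ_{\tt Z}$, so $F|_{\uparrow{\tt Z}}\in\tilde{\cal T}^\circ_{\tt Z}$, and $\Upsilon^G=\phi^{k-1}$, so $G\in\tilde{\cal F}^{k-1}$ with $G|_{\uparrow{\tt Z}}=T$ by construction.

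The main obstacle I anticipate is the uniqueness argument: it couples Property 13 applied to $P$ with the preservation of outgoing arcs on $\overline{\tt Z}$ to transfer the root structure of $P$ into $F$, and then uses $|\aleph_k^\bullet|=k$ from Property 8 against $|{\tt K}_F|=k-1$ to pin down that exactly ${\tt Z}$ loses its root. Once this is in place, the two-sided weight comparison is routine, the role of Property 10 being precisely to arrange ${\tt N}^{in}_{\tt Z}(H)=\emptyset$ so that the arc replacement $H^T_{\uparrow{\tt Z}}$ falls under case 1 of Property 3.
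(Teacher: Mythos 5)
Your argument is correct and follows essentially the same route as the paper: Property 13 supplies $P$ and ${\tt Z}$, uniqueness comes from comparing what emanates from $\overline{\tt Z}$ in $F$ and $P$, Property 10 supplies $H$ with ${\tt N}^{in}_{\tt Z}(H)=\emptyset$, and $G=H^T_{\uparrow{\tt Z}}$ closes a two-sided weight comparison (the paper routes the reverse inequality through an auxiliary forest $Q=H^F_{\uparrow{\tt Z}}$ and Property 11, whereas you use $\Upsilon^F_{\overline{\tt Z}}=\Upsilon^P_{\overline{\tt Z}}$ directly --- a cosmetic difference). Only fix two citation slips: the property producing $P$ and ${\tt Z}$ is Property 13 (not 14), and the fact that no arc leaves a labeled atom in a forest of $\tilde{\cal F}^k$ is Property 12 (not 13).
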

\begin{proof}
First, let us verify that for every $F\in\tilde{\cal F}^{k-1}$, there is a unique labeled atom from which an arc in the forest $F$ emanates. By Property 13, for $F$ there is a forest $P\in\tilde{\cal F}^{k}$ and a ${\tt Z}\in\aleph_k^\bullet$ such that $P|_{\uparrow{\overline{\tt Z}}}=F|_{\uparrow{\overline{\tt Z}}}$. That is, both $F$ and $P$ have the same number of arcs emanating from the vertices of $\overline{\tt Z}$. Since the forest $P$ does not have an arc from ${\tt Z}$, and the forest $F$ has one more arc, then the atom ${\tt Z}$ has an arc from $F$. All other atoms of the family $\aleph^\bullet_k$ in the forest $F$ do not have arcs from them, since they do not come from the forest $P$. Thus, the uniqueness of such a marked atom ${\tt Z}$ holds. 

 By Property 10, there exists a forest $H\in\tilde{\cal F}^k$ such that ${\tt N}^{in}_{\tt Z}(H)=\emptyset$ and $H|_{\uparrow{\tt Z}}=P|_{\uparrow{\tt Z}}$. 
Let us introduce the graph $Q=H^F_{\uparrow{\tt Z}}$, which by Property 3 is a forest (by the way, by construction it is a descendant of the forest $H$).
For it $Q|_{\uparrow{\tt Z}}=F|_{\uparrow{\tt Z}}$ and ${\tt N}^{in}_{\tt Z}(Q)=\emptyset$. It is also easy to verify that $Q\in\tilde{\cal F}^{k-1}$. Indeed, given that $\Upsilon^Q_{\overline{\tt Z}}=\Upsilon^H_{\overline{\tt Z}}$, we have

\begin{equation*}
\Upsilon^Q= \Upsilon^F_{\tt Z}+\Upsilon^H_{\overline{\tt Z}}= \Upsilon^F_{\tt Z}+\Upsilon^P_{\overline{\tt Z}}= \Upsilon^F_{\tt Z}+\Upsilon^F_{\overline{\tt Z}}=\Upsilon^F.
\end{equation*}
Here the first equality is by construction, the second by Property 11, the third by Property 13.

Let's represent the weight of the forest $Q$ as

\begin{equation}
\phi^{k-1}=\Upsilon^Q=\Upsilon^Q_{\tt Z}+\Upsilon^Q_{\overline{\tt Z}}  =\Upsilon^Q_{\tt Z}+\Upsilon^H_{\overline{\tt Z}} . 
\label{k-1}
\end{equation}

Now let $T\in\tilde{\cal T}^\circ_{\tt Z}$ and $G=H^T_{\uparrow{\tt Z}}$. Since ${\tt N}^{in}_{\tt Z}(H)=\emptyset$, then by Property 3 $G$ is a forest and, since $G$ has one more arc than $H$, then $G\in{\cal F}^{k-1}$. Let us verify that $G$ is a minimal forest. We have

\begin{equation}
\phi^{k-1}\leq \Upsilon^G= \Upsilon^G_{\tt Z}+\Upsilon^H_{\overline{\tt Z}}=\Upsilon^T+\Upsilon^H_{\overline{\tt Z}}=\lambda^\circ_{\tt Z}+\Upsilon^H_{\overline{\tt Z}} . 
\label{k-1'}
\end{equation}
From (\ref{k-1}) and (\ref{k-1'}), we obtain

\begin{equation}
\Upsilon^Q_{\tt Z}\leq \lambda^\circ_{\tt Z} \ .
\end{equation}
However, $Q|_{\uparrow{\tt Z}}\in {\cal T}^\circ_{\tt Z}$. This means that $\Upsilon^Q_{\tt Z}\geq \lambda^\circ_{\tt Z}$. That is, $\Upsilon^Q_{\tt Z}= \lambda^\circ_{\tt Z}$ and $Q|_{\uparrow{\tt Z}}\in \tilde{\cal T}^\circ_{\tt Z}$, and therefore $F|_{\uparrow{\tt Z}}\in \tilde{\cal T}^\circ_{\tt Z}$. Now $\Upsilon^G=\phi^{k-1}$, that is, $G\in\tilde{\cal F}^{k-1}$, and by construction $G|_{\uparrow{\tt Z}}=T$. 
\end{proof}

Note that Theorem 4 does not mean that $\tilde{\cal F}^{k-1}|_{\uparrow{\tt Z}}=\tilde{\cal T}^\circ_{\tt Z}$. The point is that for two forests $F$ and $F'$ from $\tilde{\cal F}^{k-1}$, the marked atoms ${\tt Z}$ and ${\tt Z}'$ that have an outgoing arc in them are, generally speaking, different. Let us clarify the possible situations for marked atoms.

\begin{theorem}
Let (\ref{nonequal}) hold and let there exist an atom ${\tt Z}\in\aleph^\bullet_k$ such that for any forest $F\in\tilde{\cal F}^{k-1}$ ${\tt N}^{out}_{\tt Z}(F)\neq \emptyset$ holds. Then $\tilde{\cal F}|_{\uparrow{\tt Z}}=\tilde{\cal T}^\circ_{\tt Z}$ and

\begin{equation}
\phi^{k-2}- \phi^{k-1} > \phi^{k-1}-\phi^k > \phi^k- \phi^{k+1}  .  
\label{ineq}
\end{equation}

\end{theorem}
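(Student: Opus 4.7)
The plan is to handle the equality $\tilde{\cal F}^{k-1}|_{\uparrow{\tt Z}}=\tilde{\cal T}^\circ_{\tt Z}$ essentially as a corollary of Theorem 4, and then to derive the strict inequality $\phi^{k-2}-\phi^{k-1}>\phi^{k-1}-\phi^k$ by a short contradiction argument that uses Property 6 together with Theorem 1. The other strict inequality $\phi^{k-1}-\phi^k>\phi^k-\phi^{k+1}$ is just the hypothesis (\ref{nonequal}) and needs no further argument.

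For the equality, I would argue as follows. Take any $F\in\tilde{\cal F}^{k-1}$; by Theorem 4 there is a unique ${\tt Z}_F\in\aleph_k^\bullet$ from which an arc of $F$ emanates, and the hypothesis of the present theorem forces ${\tt Z}_F$ to coincide with our fixed ${\tt Z}$, so the conclusion $F|_{\uparrow{\tt Z}}\in\tilde{\cal T}^\circ_{\tt Z}$ of Theorem 4 gives the inclusion $\tilde{\cal F}^{k-1}|_{\uparrow{\tt Z}}\subseteq\tilde{\cal T}^\circ_{\tt Z}$. The reverse inclusion is the last sentence of Theorem 4 applied with this same ${\tt Z}$: for each $T\in\tilde{\cal T}^\circ_{\tt Z}$ there is some $G\in\tilde{\cal F}^{k-1}$ with $G|_{\uparrow{\tt Z}}=T$.

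For the remaining strict inequality, I would assume for contradiction that $\phi^{k-2}-\phi^{k-1}=\phi^{k-1}-\phi^k$ and apply Property 6 with index shifted by one. This gives $\mathfrak{A}_{k-1}=\mathfrak{A}_k$ (hence $\aleph_{k-1}=\aleph_k$ and ${\tt Z}\in\aleph_{k-1}$), together with the strengthened containment $\tilde{\cal F}^{k-1}|_{\uparrow{\tt Y}}\supseteq\tilde{\cal F}^k|_{\uparrow{\tt Y}}$ for every ${\tt Y}\in\aleph_{k-1}$. Specialising to ${\tt Y}={\tt Z}$ and invoking Theorem 1 (which is available because our (\ref{nonequal}) still holds at level $k$), we have $\tilde{\cal F}^k|_{\uparrow{\tt Z}}=\tilde{\cal T}^\bullet_{\tt Z}\neq\emptyset$; picking any $T$ in this set produces an $F\in\tilde{\cal F}^{k-1}$ with $F|_{\uparrow{\tt Z}}=T$, but $T$ has no arc leaving ${\tt Z}$, so ${\tt N}^{out}_{\tt Z}(F)=\emptyset$, directly contradicting our standing hypothesis.

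The only genuinely delicate point in this argument is the index-shifted application of Property 6: one must check that the hypothesis for Property 6 at level $k-1$ is exactly the assumed equality $\phi^{k-2}-\phi^{k-1}=\phi^{k-1}-\phi^k$, and that the resulting ${\tt Z}\in\aleph_{k-1}^\bullet$ is the same labeled atom as in the current theorem (which is justified by $\aleph_{k-1}^\bullet=\aleph_k^\bullet$). Once this is set up, the contradiction with the hypothesis ${\tt N}^{out}_{\tt Z}(F)\neq\emptyset$ for all $F\in\tilde{\cal F}^{k-1}$ is immediate, and the rest of the theorem is bookkeeping.
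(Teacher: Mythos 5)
Your proposal is correct and follows essentially the same route as the paper: the equality $\tilde{\cal F}^{k-1}|_{\uparrow{\tt Z}}=\tilde{\cal T}^\circ_{\tt Z}$ is read off from the uniqueness and ``moreover'' clauses of Theorem~4, and the strict inequality is obtained by contradiction through Property~6 applied at level $k-1$. The only (harmless) difference is which conclusion of Property~6 you contradict: the paper observes directly that the hypothesis forces ${\tt Z}\notin\aleph_{k-1}^\bullet$, hence $\aleph_{k-1}^\bullet\neq\aleph_k^\bullet$, whereas you contradict the containment $\tilde{\cal F}^k|_{\uparrow{\tt Z}}\subseteq\tilde{\cal F}^{k-1}|_{\uparrow{\tt Z}}$ by combining it with Theorem~1; both work.
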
 

\begin{proof}

First of all, we note that if such an atom ${\tt Z}$ exists, then it is the only one satisfying the conditions of the theorem. Indeed, according to Theorem 4, for any $F\in\tilde{\cal F}^{k-1}$ there is a unique ${\tt Z}\in\aleph^\bullet_k$ such that ${\tt N}^{out}_{\tt Z}(F)\neq\emptyset$. According to the condition, this atom is the same for all $F$. But then from the same Theorem 4 it follows that for any $F\in\tilde{\cal F}^{k-1}$, $F|_{\uparrow{\tt Z}}\in\tilde{\cal T}^\circ_{\tt Z}$ holds. And also, for any $T\in\tilde{\cal T}^\circ_{\tt Z}$, there exists $F\in\tilde{\cal F}^{k-1}$ such that $F|_{\uparrow{\tt Z}}=T$. That is, $\tilde{\cal F}|_{\uparrow{\tt Z}}=\tilde{\cal T}^\circ_{\tt Z}$. 

Further, it follows from the condition that ${\tt Z}$ does not belong to the family $\aleph_{k-1}^\bullet$. This means that $\aleph_{k-1}^\bullet \neq \aleph_k^\bullet$. Thus, by Property 6, the equality

\begin{equation*}
\phi^{k-2}- \phi^{k-1} = \phi^{k-1}-\phi^k    
\end{equation*}
cannot be completed.
\end{proof}
 
Note that from the execution of (\ref{ineq}) it does not at all follow that a specific marked atom ${\tt Z}\in\aleph^\bullet_k$, "servicing" \ all forests from $\tilde{\cal F}^{k-1}$ at once, exists. In the example in Fig.\ref{p5} there is no such atom.

\begin{figure}[h]
\unitlength=0.9mm
\begin{center}
\begin{picture}(147,35)

\put(12,32){$V$}
\put(4,8){\vector(0,1){15}}
\put(21,6){\vector(-1,0){16}} 
\put(23,23){\vector(0,-1){15}}
\put(3,23){\vector(0,-1){15}} 
\put(5,25){\vector(1,0){16}}
\put(5,15){\small 2}
\put(0,15){\small 2}
\put(20,15){\small 3}
\put(12,1){\small 1}
%\put(12,8){\small 1} 
\put(12,21){\small 3}
\put(1,5){$\odot$}
\put(21,5){$\odot$}
\put(1,24){$\odot$}
\put(21,24){$\odot$}
\put(4,27){$\tt J$}
\put(19,27){$\tt I$}
\put(20,0){$\tt L$}
\put(4,0){$\tt L'$}

\put(42,32){$F$}
\put(33,8){\vector(0,1){15}}
\put(53,23){\vector(0,-1){15}}
\put(51,6){\vector(-1,0){16}}
\put(34,15){\small 2}
\put(50,15){\small 3}
\put(42,1){\small 1}
\put(31,5){$\odot$}
\put(51,5){$\odot$}
\put(31,24){$\odot$}
\put(51,24){$\odot$}
\put(50,0){$\tt L$}
\put(34,0){$\tt L'$}
\put(35,27){$\tt J$}
\put(49,27){$\tt I$}
\put(42,8){\tt Z}
\put(43,6){\oval(26,14)}

\put(72,32){$G$}
\put(83,23){\vector(0,-1){15}}
\put(63,23){\vector(0,-1){15}}
\put(81,6){\vector(-1,0){16}}
\put(60,15){\small 2}
\put(72,1){\small 1}
\put(80,15){\small 3}
\put(61,5){$\odot$}
\put(81,5){$\odot$}
\put(61,24){$\odot$}
\put(81,24){$\odot$}
\put(65,27){$\tt J$}
\put(79,27){$\tt I$}
\put(80,0){$\tt L$}
\put(64,0){$\tt L'$}
\put(72,8){\tt Z}
\put(73,6){\oval(26,14)}

\put(102,32){$H$}
\put(93,8){\vector(0,1){15}}
\put(95,25){\vector(1,0){16}}
\put(111,6){\vector(-1,0){16}}
\put(90,15){\small 2}
\put(102,21){\small 3}
\put(102,1){\small 1}
\put(91,5){$\odot$}
\put(111,5){$\odot$}
\put(91,24){$\odot$}
\put(111,24){$\odot$}
\put(95,27){$\tt J$} 
\put(109,27){$\tt I$}
\put(110,0){$\tt L$}
\put(94,0){$\tt L'$}
\put(102,8){\tt Z}
\put(103,06){\oval(26,14)} 

\put(132,32){$Q$}
\put(141,6){\vector(-1,0){16}}
\put(132,1){\small 1}
\put(121,5){$\odot$}
\put(141,5){$\odot$}
\put(121,24){$\odot$}
\put(141,24){$\odot$}
\put(125,27){$\tt J$} 
\put(139,27){$\tt I$}
\put(140,0){$\tt L$}
\put(124,0){$\tt L'$}
\put(132,8){\tt Z}
\put(133,06){\oval(26,14)}

\end{picture} 
\caption{\small Depicted from left to right: graph $V$; forests $F$, $G$, $H$, which form the set $\tilde{\cal F}^1={\cal F}^1$ (in this case they are spanning trees); The forest on the far right is the forest $Q$, which forms the set $\tilde{\cal F}^3$. Here $\phi^4=0$, $\phi^3=1$, $\phi^2=3$, $\phi^1=6$. The following is satisfied: $\phi^1-\phi^2>\phi^2-\phi^3> \phi^3-\phi^4$. The labeled atoms, which form $\aleph^\bullet_3$, are the vertices ${\tt I}$, ${\tt J}$ of graph $V$ and the set ${\tt Z}=\{ {\tt L,L'}\}$.
Among them there is no atom from which an arc would emanate in all the forests of the set ${\tilde{\cal F}^1}$.}
\label{p5}
\end{center}
\end{figure}
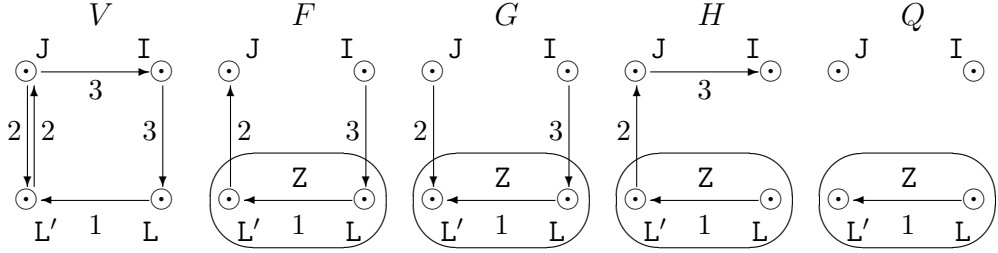 

 The situation itself, when (\ref{ineq}) is satisfied, can naturally be considered a "general position" \ situation. In \cite{VF}, the "general position" \ case \ was called an even more structured variant, when the minimum weight of forests from ${\cal F}^k$ for each $k$ was achieved on a single graph. Such a situation almost certainly occurs if the arcs of the original graph are random numbers.  
 
Let's see what it means if, on the contrary, there exists such ${\tt Z}\in\aleph^\bullet_k$ that no arc emanates from it in any forest $ F\in\tilde{\cal F}^{k-1}$.
 
 \begin{theorem}
Let (\ref{nonequal}) hold, and there exists a 
labeled atom ${\tt Z}\in\aleph^\bullet_k$ such that ${\tt N}^{out}_{\tt Z}(\tilde{\cal F}^{k-1})= \emptyset$. 
Then  $ \tilde{\cal F}^{k-1}|_{\uparrow{\tt Z}}=\tilde{\cal F}^{k-1}|_{\tt Z}=\tilde{\cal T}^\bullet_{\tt Z}$.  
\end{theorem}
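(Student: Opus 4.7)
The plan is to mirror the structure of Theorem 1, with the only twist being that we are now dealing with forests in $\tilde{\cal F}^{k-1}$ rather than $\tilde{\cal F}^k$, and that the critical information ``no arc emanates from ${\tt Z}$'' is given to us by hypothesis instead of being extracted from Property 12. The first observation is trivial: the hypothesis ${\tt N}^{out}_{\tt Z}(\tilde{\cal F}^{k-1})=\emptyset$ implies at once that $F|_{\uparrow{\tt Z}}=F|_{\tt Z}$ for every $F\in\tilde{\cal F}^{k-1}$, which gives $\tilde{\cal F}^{k-1}|_{\uparrow{\tt Z}}=\tilde{\cal F}^{k-1}|_{\tt Z}$. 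It then remains to establish the set equality $\tilde{\cal F}^{k-1}|_{\tt Z}=\tilde{\cal T}^\bullet_{\tt Z}$, which I would prove by two inclusions.

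For the inclusion $\tilde{\cal F}^{k-1}|_{\tt Z}\subseteq\tilde{\cal T}^\bullet_{\tt Z}$, I would take $F\in\tilde{\cal F}^{k-1}$ and invoke Property 15 to conclude that $F|_{\tt Z}$ is a tree. Combined with the assumption that no arc of $F$ leaves ${\tt Z}$, this forces ${\tt Z}$ to be exactly the vertex set of one of the trees of $F$ (whose root lies in ${\tt Z}$). Property 1, applied to $F\in\tilde{\cal F}^{k-1}$ with ${\tt D}={\tt Z}$, then yields $\Upsilon^F_{\tt Z}=\lambda^\bullet_{\tt Z}$, so $F|_{\tt Z}\in\tilde{\cal T}^\bullet_{\tt Z}$.

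For the reverse inclusion $\tilde{\cal T}^\bullet_{\tt Z}\subseteq\tilde{\cal F}^{k-1}|_{\tt Z}$, I would take any $T\in\tilde{\cal T}^\bullet_{\tt Z}$ and any $F\in\tilde{\cal F}^{k-1}$, and form the replacement $G=F^T_{\uparrow{\tt Z}}$. Since ${\tt N}^{out}_{\tt Z}(T)=\emptyset$ (the tree $T$ lives entirely on ${\tt Z}$), Property 3 guarantees that $G$ is a forest; moreover, the swap replaces the tree $F|_{\tt Z}$ of $F$ by another spanning tree on ${\tt Z}$, so $G$ has the same number of components as $F$ and thus $G\in{\cal F}^{k-1}$. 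A short weight computation using the additivity of $\Upsilon$, the identity $\Upsilon^G_{\overline{\tt Z}}=\Upsilon^F_{\overline{\tt Z}}$, and the equality $\Upsilon^T=\lambda^\bullet_{\tt Z}=\Upsilon^F_{\tt Z}$ from the previous step then gives $\Upsilon^G=\Upsilon^F=\phi^{k-1}$, so $G\in\tilde{\cal F}^{k-1}$ with $G|_{\tt Z}=T$.

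The argument is essentially mechanical once Property 15 is in place, so I do not expect a genuine obstacle. The one point requiring mild care is the bookkeeping of components in the swap $G=F^T_{\uparrow{\tt Z}}$: one must use the hypothesis ${\tt N}^{out}_{\tt Z}(F)=\emptyset$ to be sure that the arcs of $F$ emanating from ${\tt Z}$ constitute exactly the connected component of $F$ supported on ${\tt Z}$, so that removing them and gluing in $T$ genuinely preserves the component count. Everything else is a direct application of Properties 1, 3 and 15.
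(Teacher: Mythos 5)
Your reverse inclusion $\tilde{\cal T}^\bullet_{\tt Z}\subseteq\tilde{\cal F}^{k-1}|_{\tt Z}$ is essentially the paper's own argument and is sound. The forward inclusion, however, has a genuine gap: from ${\tt N}^{out}_{\tt Z}(F)=\emptyset$ and Property 15 you conclude that ``${\tt Z}$ is exactly the vertex set of one of the trees of $F$'' and then invoke Property 1. That intermediate claim does not follow. The hypothesis forbids arcs \emph{leaving} ${\tt Z}$, but arcs may still \emph{enter} ${\tt Z}$ from $\overline{\tt Z}$, in which case the connected component of $F$ containing ${\tt Z}$ is strictly larger than ${\tt Z}$ (see the forests $G$ and $S$ in Fig.~\ref{p52}: no arc leaves ${\tt Z}=\{{\tt L},{\tt L}'\}$, yet arcs enter it from ${\tt I}$ and/or ${\tt J}$, so ${\tt Z}$ is not a component). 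Property 1 applies only when ${\tt D}$ is the vertex set of an \emph{entire} component, so at best it gives $\lambda^\bullet_{\tt D}=\Upsilon^F_{\tt D}$ for that larger set ${\tt D}$, not $\lambda^\bullet_{\tt Z}=\Upsilon^F_{\tt Z}$. This is precisely why the paper's Theorem 1 first passes, via Property 10, to a forest $H$ with ${\tt N}^{in}_{\tt Z}(H)=\emptyset$ before using Property 1 --- and Property 10 is stated only for $\tilde{\cal F}^k$, so it is not available to you for $F\in\tilde{\cal F}^{k-1}$.

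The gap is repairable without new machinery. Since $F|_{\tt Z}$ is a tree on ${\tt Z}$ (Property 15) and no arc of $F$ leaves ${\tt Z}$, you already have $F|_{\uparrow{\tt Z}}=F|_{\tt Z}\in{\cal T}^\bullet_{\tt Z}$, hence $\Upsilon^F_{\tt Z}\geq\lambda^\bullet_{\tt Z}$; your own swap $G=F^T_{\uparrow{\tt Z}}$ with $T\in\tilde{\cal T}^\bullet_{\tt Z}$ then gives $0\leq\Upsilon^G-\Upsilon^F=\lambda^\bullet_{\tt Z}-\Upsilon^F_{\tt Z}$, forcing equality and delivering both inclusions at once. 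The paper instead proves the forward inclusion by a transfer argument: Theorem 4 identifies the unique labeled atom ${\tt L}\neq{\tt Z}$ with an outgoing arc in $F$, Property 13 produces $P\in\tilde{\cal F}^k$ with $P|_{\uparrow{\tt Z}}=F|_{\uparrow{\tt Z}}$, and Theorem 1 applied to $P$ finishes. Either repair works; as written, your proof does not.
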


\begin{proof}
Let $F\in\tilde{\cal F}^{k-1}$. By Theorem 4, there exists a unique atom ${\tt L}\in\aleph_k^\bullet$ from which an arc emanates in it. The atom ${\tt L}$ does not coincide with ${\tt Z}$ by the assumption, so ${\tt M}\subset\overline{\tt L}$. By Property 13, there exists a forest $P\in\tilde{\cal F}^k$ such that $P|_{\uparrow\overline{\tt L}}=F|_{\uparrow\overline{\tt L}}$. In particular, $P|_{\uparrow{\tt Z}}=F|_{\uparrow{\tt Z}}$. And for $P$, by Theorem 1, $P|_{\uparrow{\tt Z}}= P|_{\tt Z}\in\tilde{\cal T}^\bullet_{\tt Z}$. Therefore, $F|_{\uparrow{\tt Z}} =F|_{\tt Z}\in\tilde{\cal T}^\bullet_{\tt Z}$. 

Now let $T\in\tilde{\cal T}^\bullet_{\tt Z}$ and $F\in \tilde{\cal F}^{k-1}$. Since in $T$ there is no arc coming out of ${\tt Z}$, then by Property 3 $Q=F^T_{\uparrow{\tt Z}}$ is a forest. In the forest $F$ there is no arc coming out of ${\tt Z}$ either, therefore $Q\in{\cal F}^{k-1}$. Let us verify that $Q$ is minimal. Indeed,
\begin{equation*}
\Upsilon^Q - \Upsilon^F=\Upsilon^Q_{\tt Z}+\Upsilon^Q_{\overline{\tt Z}}-(\Upsilon^F_{\tt Z}+\Upsilon^F_{\overline{\tt Z}})=\lambda^\bullet_{\tt Z} +\Upsilon^F_{\overline{\tt Z}}-(\lambda^\bullet_{\tt Z} +\Upsilon^F_{\overline{\tt Z}})=0. 
\end{equation*}

Thus, $Q\in\tilde{\cal F}^{k-1}$, and by construction $Q|_{\uparrow{\tt Z}}= Q|_{\tt Z}=T$. 
\end{proof}

There remains an intermediate variant, when for ${\tt Z}\in\aleph^\bullet_k$ there is both a forest in which an arc emanates from ${\tt Z}$, and a forest in which an arc does not emanate from ${\tt Z}$.

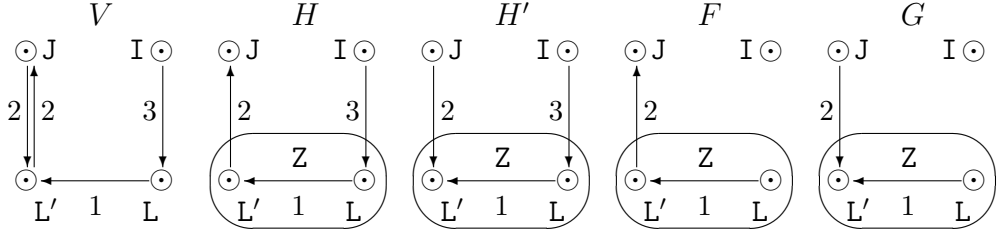
\begin{figure}[h]
\unitlength=0.9mm
\begin{center}
\begin{picture}(147,32)

\put(12,29){$V$}
\put(4,8){\vector(0,1){15}}
\put(21,6){\vector(-1,0){16}} 
\put(23,23){\vector(0,-1){15}}
\put(3,23){\vector(0,-1){15}} 
\put(5,15){\small 2}
\put(0,15){\small 2}
\put(20,15){\small 3}
\put(12,1){\small 1}
\put(1,5){$\odot$}
\put(21,5){$\odot$}
\put(1,24){$\odot$}
\put(21,24){$\odot$}
\put(5,24){$\tt J$}
\put(18,24){$\tt I$}
\put(20,0){$\tt L$}
\put(4,0){$\tt L'$}

\put(42,29){$H$}
\put(33,8){\vector(0,1){15}}
\put(53,23){\vector(0,-1){15}}
\put(51,6){\vector(-1,0){16}}
\put(34,15){\small 2}
\put(50,15){\small 3}
\put(42,1){\small 1}
\put(31,5){$\odot$}
\put(51,5){$\odot$}
\put(31,24){$\odot$}
\put(51,24){$\odot$}
\put(50,0){$\tt L$}
\put(34,0){$\tt L'$}
\put(35,24){$\tt J$}
\put(48,24){$\tt I$}
\put(42,8){\tt Z}
\put(43,6){\oval(26,14)}

\put(72,29){$H'$}
\put(63,23){\vector(0,-1){15}}
\put(83,23){\vector(0,-1){15}}
\put(81,6){\vector(-1,0){16}}
\put(64,15){\small 2}
\put(80,15){\small 3}
\put(72,1){\small 1}
\put(61,5){$\odot$}
\put(81,5){$\odot$}
\put(61,24){$\odot$}
\put(81,24){$\odot$}
\put(80,0){$\tt L$}
\put(64,0){$\tt L'$}
\put(65,24){$\tt J$}
\put(78,24){$\tt I$}
\put(72,8){\tt Z}
\put(73,6){\oval(26,14)}

\put(102,29){$F$}
\put(93,8){\vector(0,1){15}}
\put(111,6){\vector(-1,0){16}}
\put(94,15){\small 2}
\put(102,1){\small 1}
\put(91,5){$\odot$}
\put(111,5){$\odot$}
\put(91,24){$\odot$}
\put(111,24){$\odot$}
\put(95,24){$\tt J$}
\put(108,24){$\tt I$}
\put(110,0){$\tt L$}
\put(94,0){$\tt L'$}
\put(102,8){\tt Z}
\put(103,6){\oval(26,14)}

\put(132,29){$G$}
\put(123,23){\vector(0,-1){15}}
\put(141,6){\vector(-1,0){16}}
\put(120,15){\small 2}
\put(132,1){\small 1}
\put(121,5){$\odot$}
\put(141,5){$\odot$}
\put(121,24){$\odot$}
\put(141,24){$\odot$}
\put(125,24){$\tt J$} 
\put(138,24){$\tt I$}
\put(140,0){$\tt L$}
\put(124,0){$\tt L'$}
\put(132,8){\tt Z}
\put(133,06){\oval(26,14)}

\end{picture} 
\caption{\small Depicted from left to right: graph $V$; forests $H$ and $H'$, which form the set $\tilde{\cal F}^1={\cal F}^1$; forests $F$ and $G$, which form the set ${\tilde{\cal F}^2}$. The labeled atoms, which form $\aleph^\bullet_3$, are the vertices ${\tt I}$, ${\tt J}$ of graph $V$ and the set ${\tt Z}=\{ {\tt L,L'}\}$. Here $\phi^4=0$, $\phi^3=1$, $\phi^2=3$, $\phi^1=6$. The following holds: $\phi^1-\phi^2>\phi^2-\phi^3> \phi^3-\phi^4$. In this case ${\tt N}^{out}_{\tt Z}(F)=\{{\tt J}\}\neq \emptyset$ and ${\tt N}^{out}_{\tt Z}(G)= \emptyset$.}
\label{>>}
\end{center}
\end{figure}

\begin{theorem} 
Let (\ref{nonequal}) hold, and there exist a 
marked atom ${\tt Z}\in\aleph^\bullet_k$ and forests $F,G \in\tilde{\cal F}^{k-1}$ such that ${\tt N}^{out}_{\tt Z}(F)\neq \emptyset$ and ${\tt N}^{out}_{\tt Z}(G)= \emptyset$. Then  
 
1) $F|_{\uparrow{\tt Z}}\in\tilde{\cal T}^\circ_{\tt Z}$. For any $T\in\tilde{\cal T}^\circ_{\tt Z}$, there exists $Q\in\tilde{\cal F}^{k-1}$ such that $Q|_{\uparrow{\tt Z}}=T$; 

2) $G|_{\tt Z}\in\tilde{\cal T}^\bullet_{\tt Z}$. For any $T\in\tilde{\cal T}^\bullet_{\tt Z}$, there exists $Q\in\tilde{\cal F}^{k-1}$ such that $Q|_{\uparrow{\tt Z}}=Q|_{\tt Z}=T$. 
\end{theorem}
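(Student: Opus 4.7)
The plan is to observe that part~1 follows almost immediately from Theorem~4 applied to the specific forest $F$, while part~2 requires a two-step argument: a structural identification of $G|_{\tt Z}$ as a minimal tree on ${\tt Z}$ (mirroring the first half of Theorem~6), followed by an arc-replacement construction to realize an arbitrary $T\in\tilde{\cal T}^\bullet_{\tt Z}$.

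For part~1, I would note that Theorem~4 asserts that each forest in $\tilde{\cal F}^{k-1}$ emits an arc from exactly one labeled atom of $\aleph_k^\bullet$; since ${\tt N}^{out}_{\tt Z}(F)\neq\emptyset$, that unique atom for $F$ is ${\tt Z}$ itself. Theorem~4 then directly supplies both $F|_{\uparrow{\tt Z}}\in\tilde{\cal T}^\circ_{\tt Z}$ and the realization of every $T\in\tilde{\cal T}^\circ_{\tt Z}$ as $Q|_{\uparrow{\tt Z}}$ for some $Q\in\tilde{\cal F}^{k-1}$. No new construction is needed.

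For the first claim of part~2, I would argue as in Theorem~6. By Theorem~4, the unique labeled atom emitting an arc in $G$ is some ${\tt L}\in\aleph_k^\bullet$; by assumption ${\tt L}\neq{\tt Z}$, so ${\tt Z}\subset\overline{\tt L}$. Property~13 yields $P\in\tilde{\cal F}^k$ with $P|_{\uparrow\overline{\tt L}}=G|_{\uparrow\overline{\tt L}}$, and hence $P|_{\uparrow{\tt Z}}=G|_{\uparrow{\tt Z}}$. Because ${\tt N}^{out}_{\tt Z}(G)=\emptyset$, the outgoing restriction coincides with the ordinary restriction on ${\tt Z}$; and Theorem~1 applied to $P$ gives $P|_{\tt Z}\in\tilde{\cal T}^\bullet_{\tt Z}$, whence $G|_{\tt Z}\in\tilde{\cal T}^\bullet_{\tt Z}$.

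For the second claim of part~2, given $T\in\tilde{\cal T}^\bullet_{\tt Z}$, I would build $Q=G^T_{\uparrow{\tt Z}}$. Since $T$ is a tree on ${\tt Z}$ rooted in ${\tt Z}$, ${\tt N}^{out}_{\tt Z}(T)=\emptyset$, so clause~2 of Property~3 makes $Q$ a forest; and as neither $G$ nor $T$ emits arcs out of ${\tt Z}$, the replacement substitutes one tree on ${\tt Z}$ for another with the same number of arcs, so $Q\in{\cal F}^{k-1}$. A short weight computation using $\Upsilon^G_{\tt Z}=\lambda^\bullet_{\tt Z}=\Upsilon^T$ (from the step just above) and additivity over $\{{\tt Z},\overline{\tt Z}\}$ gives $\Upsilon^Q=\Upsilon^G=\phi^{k-1}$, so $Q\in\tilde{\cal F}^{k-1}$ with $Q|_{\uparrow{\tt Z}}=Q|_{\tt Z}=T$ by construction. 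The only real obstacle is the first step of part~2, where one must promote the existence of a single $G$ with ${\tt N}^{out}_{\tt Z}(G)=\emptyset$ into the structural conclusion $G|_{\tt Z}\in\tilde{\cal T}^\bullet_{\tt Z}$; after that the arc-replacement machinery is entirely parallel to Theorems~1 and~6.
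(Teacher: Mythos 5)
Your proposal is correct and follows essentially the same route as the paper: part 1 is read off from Theorem~4, and part 2 is handled exactly as in Theorem~6 (identification of $G|_{\tt Z}$ via Property~13 and Theorem~1, then the arc replacement $Q=G^T_{\uparrow{\tt Z}}$ with the weight computation); the paper's own proof simply says ``just as in Theorem 6,'' which you have spelled out faithfully.
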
 

\begin{proof}

1) Since there exists a forest $F\in\tilde{\cal F}^{k-1}$ in which an arc emanates from ${\tt Z}$, then by Theorem 4 $F|_{\uparrow{\tt Z}}\in\tilde{\cal T}^\circ_{\tt Z}$, and for any $T\in\tilde{\cal T}^\circ_{\tt Z}$, there exists $Q\in\tilde{\cal F}^{k-1}$ such that $Q|_{\uparrow{\tt Z}}=T$. 

$2)$ Now let $G\in\tilde{\cal F}^{k-1}$ be such that ${\tt N}_{\tt Z}^{out}(G)=\emptyset$. Then, by Theorem 4, there exists a unique marked atom ${\tt L}\in\aleph_{k}^\bullet$ (obviously not coinciding with ${\tt Z}$) from which an arc emanates. Now, just as in Theorem 6, we verify that $G|_{\tt Z}\in\tilde{\cal T}^\bullet_{\tt Z}$ and that for any $T\in\tilde{\cal T}^\bullet_{\tt Z}$, there exists $Q\in\tilde{\cal F}^{k-1}$ such that $Q|_{\uparrow{\tt Z}}=Q|_{\tt Z}=T$.
\end{proof}

Satisfaction of the conditions of Theorem 7 does not indicate a further sign in the convexity inequalities. In Fig.\ref{>>} (\ref{ineq}) is satisfied, whereas in Fig.\ref{=>} $\phi^{k-2}-\phi^{k-1}=\phi^{k-1}-\phi^k> \phi^k-\phi^{k+1}$ is satisfied.
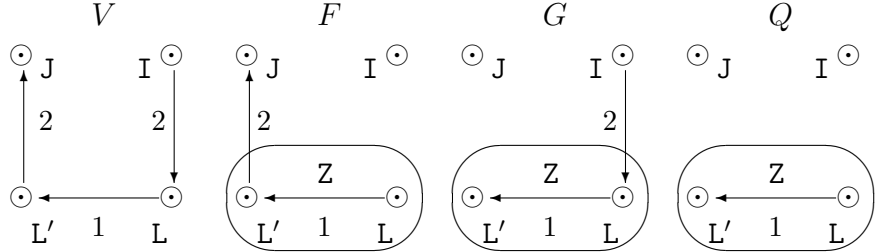
\begin{figure}[h]
\unitlength=1mm
\begin{center}
\begin{picture}(115,32)

\put(12,29){$V$}
\put(3,8){\vector(0,1){15}}
\put(21,6){\vector(-1,0){16}} 
\put(23,23){\vector(0,-1){15}}
\put(5,15){\small 2}
\put(20,15){\small 2}
\put(12,1){\small 1}
\put(1,5){$\odot$}
\put(21,5){$\odot$}
\put(1,24){$\odot$}
\put(21,24){$\odot$}
\put(5,22){$\tt J$}
\put(18,22){$\tt I$}
\put(20,0){$\tt L$}
\put(4,0){$\tt L'$}

\put(42,29){$F$}
\put(33,8){\vector(0,1){15}}
\put(51,6){\vector(-1,0){16}}
\put(34,15){\small 2}
\put(42,1){\small 1}
\put(31,5){$\odot$}
\put(51,5){$\odot$}
\put(31,24){$\odot$}
\put(51,24){$\odot$}
\put(50,0){$\tt L$}
\put(34,0){$\tt L'$}
\put(35,22){$\tt J$}
\put(48,22){$\tt I$}
\put(42,8){\tt Z}
\put(43,6){\oval(26,14)}

\put(72,29){$G$}
\put(83,23){\vector(0,-1){15}}
\put(81,6){\vector(-1,0){16}}
\put(72,1){\small 1}
\put(80,15){\small 2}
\put(61,5){$\odot$}
\put(81,5){$\odot$}
\put(61,24){$\odot$}
\put(81,24){$\odot$}
\put(65,22){$\tt J$}
\put(78,22){$\tt I$}
\put(80,0){$\tt L$}
\put(64,0){$\tt L'$}
\put(72,8){\tt Z}
\put(73,6){\oval(26,14)}

\put(102,29){$Q$}
\put(111,6){\vector(-1,0){16}}
\put(102,1){\small 1}
\put(91,5){$\odot$}
\put(111,5){$\odot$}
\put(91,24){$\odot$}
\put(111,24){$\odot$}
\put(95,22){$\tt J$} 
\put(108,22){$\tt I$}
\put(110,0){$\tt L$}
\put(94,0){$\tt L'$}
\put(102,8){\tt Z}
\put(103,06){\oval(26,14)} 

\end{picture} 
\caption{\small Depicted from left to right: graph $V$, coinciding with the unique spanning forest of the set $\tilde{\cal F}^1={\cal F}^1$; forests $F$ and $G$, constituting the set $\tilde{\cal F}^2$; forest $Q$, constituting the set $\tilde{\cal F}^3$. The labeled atoms constituting $\aleph^\bullet_3$ are vertices ${\tt I}$, ${\tt J}$ of graph $V$ and the set ${\tt Z}=\{ {\tt L,L'}\}$. Here $\phi^4=0$, $\phi^3=1$, $\phi^2=3$, $\phi^1=5$. Satisfied $\phi^1-\phi^2=\phi^2-\phi^3> \phi^3-\phi^4$. In this case ${\tt N}^{out}_{\tt Z}(F)=\{{\tt J}\}\neq \emptyset$ and ${\tt N}^{out}_{\tt Z}(G)= \emptyset$.  }
\label{=>}
\end{center}
\end{figure} 

\begin{conclusion}
Note that Theorems 1 and 2 mean, in particular, the following. Suppose that when (\ref{nonequal}) holds, all atoms from $\aleph_k$ are known, and also for every ${\tt Z}\in\aleph^\bullet_k$ the set of trees $\tilde{\cal T}^\bullet_{\tt Z}$ is known, and for every ${\tt X}\in\aleph^\circ_k$ the trees $\tilde{\cal T}^\circ_{\tt X}$ are known. This allows us to construct all forests from $\tilde{\cal F}^k$, as well as vice versa --- the set $\tilde{\cal F}^k$ completely determines both $\aleph_k$ and the sets $\tilde{\cal T}^\bullet_{\tt Z}$, $\tilde{\cal T}^\circ_{\tt X}$. If the trees $\tilde{\cal T}^\circ_{\tt Z}$ are also known, then according to Theorems 3-7, all forests of $\tilde{\cal F}^{k-1}$ can be constructed. In this way, we can define both the algebra $\mathfrak{A}_{k-1}$ and its labeled and unlabeled atoms and, accordingly, the sets of trees $\tilde{\cal T}^\bullet_{\tt Z'}$ and $\tilde{\cal T}^\circ_{\tt X'}$ for labeled ${\tt Z'}\in\aleph^\bullet_{k-1}$ and unlabeled ${\tt X'}\in\aleph^\circ_{k-1}$ atoms of the new algebra $\mathfrak{A}_{k-1}$.
However, this knowledge does not allow us to define trees from $\tilde{\cal T}^\circ_{\tt Z'}$ (they must be constructed anew). Thus, forests from $\tilde{\cal F}^{k-2}$ are also not defined. 

By Property 15, we know that for $F\in\tilde{\cal F}^{k-2}$, the graph $F|_{\tt Y'}$ is a tree, if ${\tt Y'}\in\aleph_{k-1}$. But if ${\tt Y}\in\aleph_{k}$, then it is not at all necessary that the graph $F|_{\tt Y}$ is a tree.

To be quite precise, note that for $F\in\tilde{\cal F}^{k-2}$, if (\ref{ineq}) holds by Theorem 3 for all ${\tt X'}\in\aleph^\circ_{k-1}$, $F|_{\uparrow{\tt X'}}$ is a tree from the set $\tilde{\cal T}^\circ_{\tt X'}$. And this set is constructed by us. Therefore, for ${\tt Y}\in\aleph_k$, if ${\tt Y}\subset{\tt X'}$, then $F|_{\uparrow{\tt Y}}$ is also a tree and it is known to us.

But if ${\tt Y}\subset{\tt Z'}\in\aleph^\bullet_{k-1}$, then by Property 13 we know only that $F|_{\tt Z'}$ is a tree and nothing more. For a tree $T\in \tilde{\cal T}^\circ_{\tt Z'}$, there is no reason to expect that the graph $T|_{\tt Y}$ is also a tree. It is, generally speaking, a forest. And the following example demonstrates this.
\end{conclusion}

Let us use the example from \cite{V7,V9} from this point of view. 

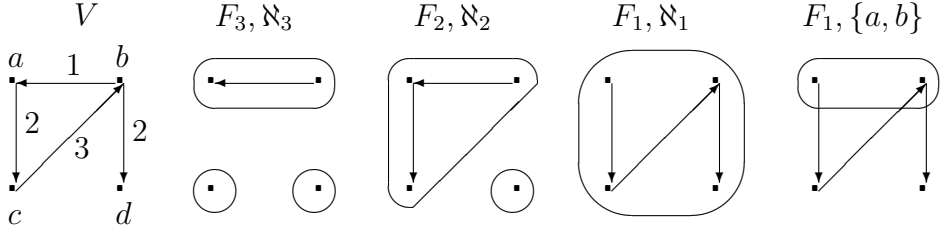
\begin{figure}[h]
\unitlength=1.1mm
\begin{center}
\begin{picture}(115,30)

\put (8,25){$V$}
\put(0,5){$\centerdot$}
\put(13,5){$\centerdot$}
\put(0,18){$\centerdot$}
\put(13,18){$\centerdot$}
\put(0,20){$a$}
\put(13,20){$b$}
\put(0,1){$c$}
\put(13,1){$d$}
\put(1,18){\vector(0,-1){12}}
\put(13,18){\vector(-1,0){12}}
\put(14,18){\vector(0,-1){12}}
\put(1,5){\vector(1,1){13}}
\put(7,19){1}
\put(2,12){2}
\put(8,9){3}
\put(15,11){2}

\put(25,25){$F_3, \aleph_3$}
\put(24,5){$\centerdot$}
\put(37,5){$\centerdot$}
\put(24,18){$\centerdot$}
\put(37,18){$\centerdot$}
\put(37,18){\vector(-1,0){12}}
\put(31,18){\oval(17,6)}
\put(25,5){\circle{5}}
\put(37,5){\circle{5}}

\put (49,25){$F_2,\aleph_2$}
\put(48,5){$\centerdot$}
\put(61,5){$\centerdot$}
\put(48,18){$\centerdot$}
\put(61,18){$\centerdot$}
\put(61,18){\vector(-1,0){12}}
\put(49,18){\vector(0,-1){12}}
\put(49,18){\oval(6,6)[tl]}
\put(46,18){\line(0,-1){12}}
\put(49,6){\oval(6,6)[bl]}
\put(49,21){\line(1,0){12}}
\put(61,18){\oval(6,6)[tr]}
\put(49,3){\line(1,1){15}}
\put(61,5){\circle{5}}

\put(73,25){$F_1,\aleph_1$}
\put(72,5){$\centerdot$}
\put(85,5){$\centerdot$}
\put(72,18){$\centerdot$}
\put(85,18){$\centerdot$}
\put(73,18){\vector(0,-1){12}}
\put(86,18){\vector(0,-1){12}}
\put(73,5){\vector(1,1){13}}
\put(79,12){\oval(20,20)}

\put (96,25){$F_1,\{a,b\}$}
\put(97,5){$\centerdot$}
\put(110,5){$\centerdot$}
\put(97,18){$\centerdot$}
\put(110,18){$\centerdot$}
\put(98,18){\vector(0,-1){12}}
\put(111,18){\vector(0,-1){12}}
\put(98,5){\vector(1,1){13}}
\put(104,18){\oval(17,6)}

\end{picture}  
\caption{\small Graph $V$ and its arc weights (left), minimal forests $F_{3,2,1}$ and atoms of algebras $\mathfrak{A}_{3,2,1}$. On the right, the forest $F_1$ restricted to a labeled atom $\{a,b\}$ of the algebra of subsets $\mathfrak{A}_3$ is not a tree.}
\label{atom}
\end{center}
\end{figure}

\begin{example}
The weights of the arcs of the graph $V$: $v_{ba}=1$, $v_{ac}=v_{bd}=2$, $v_{cb}=3$, (see Fig.\ref{atom}). Here, each of the sets ${\cal F}^k$, $k\in\{1,2,3,4\}$ consists of a single forest: $\{F_k\}={\cal F}^{k}=\tilde{\cal F}^{k}$. We have: $\phi^4=0$, $\phi^3=1$, $\phi^2=3$, $\phi^1=7$ and by definition $\phi^0=\infty$. The convexity inequalities are of the form

\begin{equation*}
\phi^0-\phi^1>\phi^1-\phi^2>\phi^2-\phi^3>\phi^3-\phi^4 ,
\end{equation*}
in which the strict inequality sign holds everywhere. All atoms of all algebras are labeled. The set $\{a,b\}$ is a labeled atom of the algebra $\mathfrak{A}_3$. Moreover, the induced subgraph $F_1|_{\{a,b\}}$ is not a tree, but an empty forest consisting of two empty trees with roots at the vertices $a$ and $b$.

Let's take a closer look at how this happened. Let's follow the trees of type ${\cal T}^\circ_{\tt Y}$. 
Consider the tree $T_1$, which forms the set $\tilde{\cal T}^\circ_{\{ a,b,c\}}$. The set of its vertices ${\tt N}=\{ a,b,c,d\}$ is an element of the trivial algebra $\mathfrak{A}_1$, which consists of ${\tt N}$ and the empty set. If $T_1$ is restricted to the marked atom $\{ a,b,c\}\in\aleph^\bullet_2$ of the previous algebra, then this restriction, in full accordance with the proven theorems, turns out to be a tree consisting of two arcs $(a,c)$ and $(c,b)$. However, this tree is not contained in the set $\tilde{\cal T}^\circ_{\{ a,b\}}$ (see Fig.\ref{atomt}). Because of this circumstance, the restriction of the tree $T_1$ to the atom $\{ a,b\}\in\aleph^\bullet_3$ of the even earlier algebra $\mathfrak{A}_3$ is no longer a tree --- the graph $T_1|_{\{ a,b\}}$ is an empty forest consisting of two empty trees rooted at $a$ and $b$. 

Let us make one more important remark to this example. The arc of minimum weight is not included in the spanning tree of minimum weight.

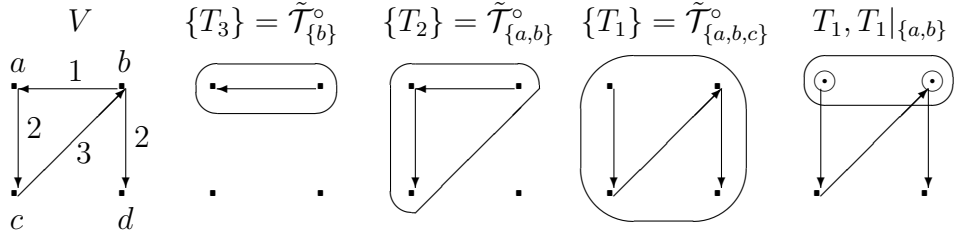
\begin{figure}[h]
\unitlength=1.1mm
\begin{center}
\begin{picture}(115,30)

\put (7,25){$V$}
\put(0,5){$\centerdot$}
\put(13,5){$\centerdot$}
\put(0,18){$\centerdot$}
\put(13,18){$\centerdot$}
\put(0,20){$a$}
\put(13,20){$b$}
\put(0,1){$c$}
\put(13,1){$d$}
\put(1,18){\vector(0,-1){12}}
\put(13,18){\vector(-1,0){12}}
\put(14,18){\vector(0,-1){12}}
\put(1,5){\vector(1,1){13}}
\put(7,19){1}
\put(2,12){2}
\put(8,9){3}
\put(15,11){2}

\put(21,25){$\{ T_3\}=\tilde{\cal T}^\circ_{\{ b\}} $}
\put(24,5){$\centerdot$}
\put(37,5){$\centerdot$}
\put(24,18){$\centerdot$}
\put(37,18){$\centerdot$}
\put(37,18){\vector(-1,0){12}}
\put(31,18){\oval(17,6)}

\put (45,25){$\{ T_2\}=\tilde{\cal T}^\circ_{\{ a,b\}} $}
\put(48,5){$\centerdot$}
\put(61,5){$\centerdot$}
\put(48,18){$\centerdot$}
\put(61,18){$\centerdot$}
\put(61,18){\vector(-1,0){12}}
\put(49,18){\vector(0,-1){12}}
\put(49,18){\oval(6,6)[tl]}
\put(46,18){\line(0,-1){12}}
\put(49,6){\oval(6,6)[bl]}
\put(49,21){\line(1,0){12}}
\put(61,18){\oval(6,6)[tr]}
\put(49,3){\line(1,1){15}}

\put(69,25){$\{ T_1\}=\tilde{\cal T}^\circ_{\{ a,b,c\}} $}
\put(72,5){$\centerdot$}
\put(85,5){$\centerdot$}
\put(72,18){$\centerdot$}
\put(85,18){$\centerdot$}
\put(73,18){\vector(0,-1){12}}
\put(86,18){\vector(0,-1){12}}
\put(73,5){\vector(1,1){13}}
\put(79,12){\oval(20,20)}

\put (97,25){$T_1, T_1|_{\{a,b\}}$}
\put(97,5){$\centerdot$}
\put(110,5){$\centerdot$}
\put(97,18){$\odot $}
\put(110,18){$\odot$}
\put(98,18){\vector(0,-1){12}}
\put(111,18){\vector(0,-1){12}}
\put(98,5){\vector(1,1){13}}
\put(105,19){\oval(18,6)}

\end{picture} 
\caption{\small From left to right: Graph $V$ and its arc weights; trees $T_{3}$, $T_{2}$, $T_{1}$ representing the sets of trees $\tilde{\cal T}^\circ_{\{ b\}}$, $\tilde{\cal T}^\circ_{\{ a,b\}}$, $\tilde{\cal T}^\circ_{\{ a,b,c\}}={\cal T}^\circ_{\{ a,b,c\}}$, respectively; on the right -- tree $T_1$ and its restriction to the labeled atom $\{a,b\}\in\aleph_3^\bullet$. This restriction is not a tree, but an empty forest consisting of two empty trees rooted at $a$ and $b$.}
\label{atomt}
\end{center}
\end{figure}
\end{example}

\subsection{Convexity inequalities contain equality signs}

If in convexity inequalities there are strict inequalities in a row, that is, (\ref{ineq}) is satisfied, then nothing additional to the proven Theorems can be said. Now let both the inequality sign and the equality sign occur.    
\begin{equation}
\phi^{m-1}-\phi^m >\phi^m- \phi^{m+1}=\ldots = 
 \phi^{k-1}-\phi^k >\phi^k- \phi^{k+1} , k-m>1.
 \label{nequal}
\end{equation}
When $k-m=1$ there are no equal signs and we get into the situation (\ref{ineq}).

\begin{theorem}
Let (\ref{nequal}) hold. Then $\mathfrak{A}_{m+1}=\mathfrak{A}_{m+2}=\ldots = \mathfrak{A}_{k}$, and  $\aleph^\bullet_{m+1}=\aleph^\bullet_{m+2}=\ldots = \aleph^\bullet_{k}$,  
\begin{equation}
\tilde{\cal F}^m|_{\uparrow{\tt Y}}\subseteq \tilde{\cal F}^{m+1}|_{\uparrow{\tt Y}}=\tilde{\cal F}^{m+2}|_{\uparrow{\tt Y}}=\cdots =\tilde{\cal F}^{k-1}|_{\uparrow{\tt Y}}\supseteq \tilde{\cal F}^k|_{\uparrow{\tt Y}} \ , \ \ {\tt Y}\in\aleph_k \ . 
\label{calf}
\end{equation}
\end{theorem}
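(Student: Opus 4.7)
The plan is to apply Property 6 iteratively to each equality appearing in (\ref{nequal}) and then chain the conclusions. For every $j$ with $m+1 \le j \le k-1$, the condition (\ref{nequal}) forces $\phi^{j-1}-\phi^j = \phi^j - \phi^{j+1}$, so Property 6 yields $\mathfrak{A}_j = \mathfrak{A}_{j+1}$, $\aleph_j^\bullet = \aleph_{j+1}^\bullet$, and, for every ${\tt Y} \in \aleph_j$,
\begin{equation}
\tilde{\cal F}^{j-1}|_{\uparrow{\tt Y}} \subseteq \tilde{\cal F}^j|_{\uparrow{\tt Y}} \supseteq \tilde{\cal F}^{j+1}|_{\uparrow{\tt Y}} .
\label{pl1}
\end{equation}

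First I would chain the algebra equalities over $j = m+1,\ldots,k-1$ to obtain $\mathfrak{A}_{m+1} = \cdots = \mathfrak{A}_k$, and likewise $\aleph^\bullet_{m+1} = \cdots = \aleph^\bullet_k$. Since equal algebras share the same atoms, the families $\aleph_{m+1},\ldots,\aleph_k$ all coincide, so any ${\tt Y} \in \aleph_k$ is an atom of each algebra used below, and (\ref{pl1}) is applicable with this single ${\tt Y}$ at every $j$ in the range.

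Next, I would derive the middle equality chain of (\ref{calf}). Fix ${\tt Y} \in \aleph_k$. For any $j$ with $m+1 \le j \le k-2$, applying (\ref{pl1}) at index $j$ gives $\tilde{\cal F}^{j+1}|_{\uparrow{\tt Y}} \subseteq \tilde{\cal F}^j|_{\uparrow{\tt Y}}$, while applying it at index $j+1$ gives $\tilde{\cal F}^j|_{\uparrow{\tt Y}} \subseteq \tilde{\cal F}^{j+1}|_{\uparrow{\tt Y}}$; together these force $\tilde{\cal F}^j|_{\uparrow{\tt Y}} = \tilde{\cal F}^{j+1}|_{\uparrow{\tt Y}}$. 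Iterating over $j$ produces $\tilde{\cal F}^{m+1}|_{\uparrow{\tt Y}} = \cdots = \tilde{\cal F}^{k-1}|_{\uparrow{\tt Y}}$. The two outer inclusions in (\ref{calf}) are exactly what (\ref{pl1}) supplies at the endpoints: $j = m+1$ reads $\tilde{\cal F}^m|_{\uparrow{\tt Y}} \subseteq \tilde{\cal F}^{m+1}|_{\uparrow{\tt Y}}$, and $j = k-1$ reads $\tilde{\cal F}^k|_{\uparrow{\tt Y}} \subseteq \tilde{\cal F}^{k-1}|_{\uparrow{\tt Y}}$.

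I do not expect any genuine obstacle: once Property 6 is in hand, the entire argument is a bookkeeping induction. The only point requiring mild care is the compatibility of atom families across the intermediate algebras, addressed in the first step, which licenses the uniform choice of ${\tt Y} \in \aleph_k$ in all subsequent invocations of (\ref{pl1}); the degenerate case $k-m=2$ (a single equality) is handled automatically, since the middle chain of (\ref{calf}) then reduces to the trivial identity $\tilde{\cal F}^{m+1}|_{\uparrow{\tt Y}}=\tilde{\cal F}^{m+1}|_{\uparrow{\tt Y}}$ while the two boundary inclusions come from a single application of Property 6 at $j=m+1$.
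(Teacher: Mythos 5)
Your proposal is correct and is exactly the argument the paper intends: the paper's own proof is the one-line remark that one should ``consistently use Property 6 for the equal signs in (\ref{nequal})'', and your write-up simply fills in the bookkeeping (chaining the algebra equalities, extracting the middle equalities from the two opposite inclusions at consecutive indices, and reading off the outer inclusions at the endpoints $j=m+1$ and $j=k-1$). No discrepancy to report.
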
 

\begin{proof}
To prove this, it is sufficient to consistently use Property 6 for the equal signs in (\ref{nequal}).
\end{proof}

By Theorem 5, we exclude from consideration the case when there exists an atom ${\tt Z}\in\aleph^\bullet_k$ such that for all $F\in\tilde{\cal F}^{k-1}$ we have ${\tt N}^{out}_{\tt Z}(F)\neq \emptyset$. First, consider the inverse situation when there exists an ${\tt Z}\in\aleph^\bullet_k$ such that ${\tt N}^{out}_{\tt Z}(\tilde{\cal F}^{k-1})=\emptyset$.

\begin{theorem}
Let (\ref{nequal}) hold and let ${\tt Z}\in\aleph^\bullet_k$ be such that ${\tt N}^{out}_{\tt Z}(\tilde{\cal F}^{k-1})=\emptyset$. Then  $\tilde{\cal F}^{l}|_{\uparrow{\tt Z}}= \tilde{\cal F}^{l}|_{\tt Z}=\tilde{\cal T}^\bullet_{\tt Z}$ 
 under $l=m,m+1,\ldots,k$.  
\end{theorem}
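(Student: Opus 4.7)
The result asserts a triple equality for every $l\in\{m,m+1,\ldots,k\}$, and it is convenient to handle the extreme values first and then the intermediate ones.

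First I would dispose of the top endpoints. For $l=k$, Theorem 1 applied to the labeled atom ${\tt Z}$ directly yields $\tilde{\cal F}^k|_{\uparrow{\tt Z}}=\tilde{\cal F}^k|_{\tt Z}=\tilde{\cal T}^\bullet_{\tt Z}$; this uses only the right-hand strict inequality $\phi^{k-1}-\phi^k>\phi^k-\phi^{k+1}$ contained in (\ref{nequal}). For $l=k-1$, the hypothesis ${\tt N}^{out}_{\tt Z}(\tilde{\cal F}^{k-1})=\emptyset$ is precisely the premise of Theorem 6, which delivers the required triple equality verbatim.

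Next, for $l\in\{m+1,\ldots,k-2\}$, I would invoke Theorem 9: under (\ref{nequal}) it supplies $\tilde{\cal F}^l|_{\uparrow{\tt Z}}=\tilde{\cal F}^{k-1}|_{\uparrow{\tt Z}}$, and the right-hand side has just been identified with $\tilde{\cal T}^\bullet_{\tt Z}$. Every element of $\tilde{\cal T}^\bullet_{\tt Z}$ is a tree whose vertex set is exactly ${\tt Z}$, so the outgoing restriction $F|_{\uparrow{\tt Z}}$ of any $F\in\tilde{\cal F}^l$ uses no vertex outside ${\tt Z}$; this forces ${\tt N}^{out}_{\tt Z}(F)=\emptyset$ and hence $F|_{\uparrow{\tt Z}}=F|_{\tt Z}$. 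The triple equality for intermediate indices follows.

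The genuinely new case is $l=m$, and this is where I expect the main technical work. Theorem 9 only provides $\tilde{\cal F}^m|_{\uparrow{\tt Z}}\subseteq\tilde{\cal F}^{m+1}|_{\uparrow{\tt Z}}=\tilde{\cal T}^\bullet_{\tt Z}$, so the obstacle is the reverse inclusion. The forward inclusion also shows, by the same vertex-set argument as above, that ${\tt N}^{out}_{\tt Z}(F)=\emptyset$ and $F|_{\tt Z}=F|_{\uparrow{\tt Z}}\in\tilde{\cal T}^\bullet_{\tt Z}$ for every $F\in\tilde{\cal F}^m$. For the reverse inclusion I would fix $T\in\tilde{\cal T}^\bullet_{\tt Z}$, pick any $F'\in\tilde{\cal F}^m$, and construct $F=(F')^T_{\uparrow{\tt Z}}$. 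Since $F'|_{\tt Z}\in\tilde{\cal T}^\bullet_{\tt Z}$ and no arc of $F'$ leaves ${\tt Z}$, the set ${\tt Z}$ is the vertex set of an entire tree of $F'$; and since $T$ likewise has no arc leaving ${\tt Z}$, Property 3 (case 2) ensures that $F$ is a forest with the same number of trees as $F'$, namely $m$. A direct weight computation using Property 1 ($\Upsilon^{F'}_{\tt Z}=\lambda^\bullet_{\tt Z}=\Upsilon^T$) gives $\Upsilon^F=\Upsilon^{F'}=\phi^m$, so $F\in\tilde{\cal F}^m$, and by construction $F|_{\uparrow{\tt Z}}=F|_{\tt Z}=T$. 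This completes the remaining inclusion $\tilde{\cal T}^\bullet_{\tt Z}\subseteq\tilde{\cal F}^m|_{\tt Z}$ and closes the argument.
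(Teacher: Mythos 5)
Your proof is correct and follows essentially the same route as the paper: Theorem 1 for $l=k$, Theorem 6 for $l=k-1$, the chain of equalities for the intermediate indices, and an arc replacement $Q=(F')^T_{\uparrow{\tt Z}}$ with a weight comparison for the genuinely new case $l=m$. Two small repairs: the chain of equalities you invoke is supplied by Theorem 8 (equation (\ref{calf})), not by ``Theorem 9'' --- as written you cite the statement being proved; and your claim that ${\tt Z}$ is the vertex set of an entire tree of $F'$ need not hold (arcs of $F'$ may enter ${\tt Z}$ from outside), but it is also not needed, since $\Upsilon^{F'}_{\tt Z}=\lambda^\bullet_{\tt Z}$ already follows from $F'|_{\uparrow{\tt Z}}=F'|_{\tt Z}\in\tilde{\cal T}^\bullet_{\tt Z}$, and the tree count is preserved simply because $F'|_{\uparrow{\tt Z}}$ and $T$ contain the same number of arcs while Property 3 (case 2) guarantees the replacement is a forest.
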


\begin{proof}

By Theorem 6, $\tilde{\cal F}^{k-1}|_{\uparrow{\tt Z}} =\tilde{\cal F}^{k-1}|_{\tt Z} =\tilde{\cal T}^\bullet_{\tt Z}$, and by Theorem 1, $\tilde{\cal F}^k|_{\tt Z} =\tilde{\cal T}^\bullet_{\tt Z}$. From (\ref{calf}) we now conclude that

\begin{equation*}
\tilde{\cal F}^m|_{\uparrow{\tt Z}}\subseteq \tilde{\cal F}^{m+1}|_{\uparrow{\tt Z}}= \tilde{\cal F}^{m+2}|_{\uparrow{\tt Z}}=\cdots  =\tilde{\cal F}^{k-1}|_{\uparrow{\tt Z}}= \tilde{\cal F}^k|_{\uparrow{\tt Z}}=\tilde{\cal T}^\bullet_{\tt Z}  . 
%\label{calf1}
\end{equation*}
In particular, ${\tt N}_{\tt Z}^{out}(\tilde{\cal F}^m)=\emptyset$ and $\Upsilon^F_{\tt Z}=\lambda_{\tt Z}^\bullet$. Now let $F\in\tilde{\cal F}^m$ and $T\in\tilde{\cal T}^\bullet_{\tt Z}$. By Property 3, the graph $Q=F^T_{\uparrow{\tt Z}}$ is a forest. Obviously, $Q\in{\cal F}^m$. Let us verify that it is minimal. Indeed,
\begin{equation*}
0\le \Upsilon^Q-\Upsilon^F=\Upsilon^Q_{\tt Z}-\Upsilon^F_{\tt Z} =\lambda_{\tt Z}^\bullet-\Upsilon^F_{\tt Z}= 0 \ .
\end{equation*} 
Thus, $Q\in\tilde{\cal F}^m$, and by construction $Q|_{\uparrow{\tt Z}}=Q|_{\tt Z}=T$. 

\end{proof}

It remains to consider the case when for ${\tt Z}\in\aleph^\bullet_k$ there is a forest $F\in\tilde{\cal F}^{k-1}$ such that ${\tt N}^{out}_{\tt Z}(F)\neq \emptyset$, and there is a forest $G\in\tilde{\cal F}^{k-1}$ such that ${\tt N}^{out}_{\tt Z}(G)= \emptyset$.

\begin{theorem}
Let (\ref{nequal}) hold, and let there exist ${\tt Z}\in\aleph^\bullet_k$, and forests $F,G\in\tilde{\cal F}^{k-1}$ such that ${\tt N}^{out}_{\tt Z}(F)\neq \emptyset$ and ${\tt N}^{out}_{\tt Z}(G)= \emptyset$. Then for $l=m+1,\ldots,k-1$ 

1) there exists a forest $H\in \tilde{\cal F}^{l}$ such that ${\tt N}^{out}_{\tt Z}(H)\neq \emptyset$. For it, $H|_{\uparrow{\tt Z}}\in\tilde{\cal T}^\circ_{\tt Z}$ holds. For any $T\in\tilde{\cal T}^\circ_{\tt Z}$, there exists a forest $Q\in\tilde{\cal F}^{l}$ such that $Q|_{\uparrow{\tt Z}}=T$;  

2) there exists a forest $S\in \tilde{\cal F}^{l}$ such that ${\tt N}^{out}_{\tt Z} (S)= \emptyset$. For it, $S|_{\tt Z}\in\tilde{\cal T}^\bullet_{\tt Z}$. 
For any $T\in\tilde{\cal T}^\bullet_{\tt Z}$, there exists a forest $Q\in\tilde{\cal F}^{l}$ such that $Q|_{\uparrow{\tt Z}} =Q|_{\tt Z}=T$.
\end{theorem}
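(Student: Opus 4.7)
The plan is to lift the conclusions of Theorem 7 at level $k-1$ to all intermediate levels $l\in\{m+1,\ldots,k-1\}$ by invoking the chain of equalities of outgoing restrictions supplied by Theorem 8. Since (\ref{nequal}) entails the strict inequality $\phi^{k-1}-\phi^k>\phi^k-\phi^{k+1}$ --- that is, (\ref{nonequal}) at position $k$ --- Theorem 7 applies verbatim to the atom ${\tt Z}\in\aleph^\bullet_k$ and the forests $F,G\in\tilde{\cal F}^{k-1}$. This first step yields $F|_{\uparrow{\tt Z}}\in\tilde{\cal T}^\circ_{\tt Z}$, $G|_{\tt Z}\in\tilde{\cal T}^\bullet_{\tt Z}$, and the two realization statements: every $T\in\tilde{\cal T}^\circ_{\tt Z}$ equals $Q'|_{\uparrow{\tt Z}}$ for some $Q'\in\tilde{\cal F}^{k-1}$, and every $T\in\tilde{\cal T}^\bullet_{\tt Z}$ equals $Q'|_{\uparrow{\tt Z}}=Q'|_{\tt Z}$ for some $Q'\in\tilde{\cal F}^{k-1}$.

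Next I would apply Theorem 8 to ${\tt Z}\in\aleph^\bullet_k\subseteq\aleph_k$. It supplies
\[
\tilde{\cal F}^{l}|_{\uparrow{\tt Z}}=\tilde{\cal F}^{k-1}|_{\uparrow{\tt Z}}\quad\text{for all}\ l\in\{m+1,\ldots,k-1\},
\]
which transports every outgoing restriction attained at level $k-1$ to level $l$, and vice versa. Applying this to $F|_{\uparrow{\tt Z}}$ produces $H\in\tilde{\cal F}^l$ with $H|_{\uparrow{\tt Z}}=F|_{\uparrow{\tt Z}}\in\tilde{\cal T}^\circ_{\tt Z}$, so in particular ${\tt N}^{out}_{\tt Z}(H)\neq\emptyset$; and applying it to the $Q'$ furnished by Theorem 7 for a chosen $T\in\tilde{\cal T}^\circ_{\tt Z}$ produces the required $Q\in\tilde{\cal F}^l$ with $Q|_{\uparrow{\tt Z}}=T$. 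This settles part 1). Part 2) is handled in exactly the same way, starting from $G|_{\uparrow{\tt Z}}=G|_{\tt Z}$ and from the second realization statement of Theorem 7.

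The one point that needs genuine care --- and the only candidate for a real obstacle --- is the additional assertion $Q|_{\tt Z}=Q|_{\uparrow{\tt Z}}$ (and the corresponding assertion for $S$) in part 2). Here I would observe that any $T\in\tilde{\cal T}^\bullet_{\tt Z}$ has vertex set exactly ${\tt Z}$ and hence contains no arc exiting ${\tt Z}$; the equality $Q|_{\uparrow{\tt Z}}=T$ therefore forces ${\tt N}^{out}_{\tt Z}(Q)=\emptyset$, and by the definition of outgoing restriction this yields $Q|_{\uparrow{\tt Z}}=Q|_{\tt Z}$. The same observation applied to the transport of $G|_{\uparrow{\tt Z}}$ gives ${\tt N}^{out}_{\tt Z}(S)=\emptyset$ and $S|_{\tt Z}=S|_{\uparrow{\tt Z}}=G|_{\tt Z}\in\tilde{\cal T}^\bullet_{\tt Z}$, completing the argument.
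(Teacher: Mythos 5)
Your proposal is correct and follows essentially the same route as the paper: the paper's proof likewise observes that the case $l=k-1$ is exactly Theorem 7 and then transports the conclusion to $l=m+1,\ldots,k-2$ via the chain of equalities $\tilde{\cal F}^{m+1}|_{\uparrow{\tt Y}}=\cdots=\tilde{\cal F}^{k-1}|_{\uparrow{\tt Y}}$ from Theorem 8. Your extra care about why $Q|_{\uparrow{\tt Z}}=Q|_{\tt Z}$ in part 2) (a tree in $\tilde{\cal T}^\bullet_{\tt Z}$ has no arc leaving ${\tt Z}$, so the transported forest has empty outgoing neighborhood) is a detail the paper leaves implicit, but it is the right justification.
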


\begin{proof}
For $l=k-1$ the statement of the theorem is satisfied, since it coincides with Theorem 7. Now for $l=m+1,m+2,\ldots ,k-2$ the statement of the theorem follows from the chain of equalities in (\ref{calf}).
\end{proof}

\begin{theorem}
Let (\ref{nequal}) hold, and let there be ${\tt Z}\in\aleph^\bullet_k$, and forests $F,G\in\tilde{\cal F}^{k-1}$ such that ${\tt N}^{out}_{\tt Z}(F)\neq \emptyset$ and ${\tt N}^{out}_{\tt Z}(G)= \emptyset$. 
Then ${\tt N}^{out}_{\tt Z}(\tilde{\cal F}^m)\neq \emptyset$ and

1) there exists a forest $H\in\tilde{\cal F}^m$ such that ${\tt N}^{out}_{\tt Z}(H)\neq \emptyset$. For it, $H|_{\tt Z}\in \tilde{\cal T}_{\tt Z}^\circ$. In particular, if for all $H\in\tilde{\cal F}^m$, ${\tt N}^{out}_{\tt Z}(H)\neq \emptyset$, then $\tilde{\cal F}^m|_{\uparrow{\tt Z}}\subseteq\tilde{\cal T}^\circ_{\tt Z}$.

2) if there exists a forest $S\in\tilde{\cal F}^m$ such that ${\tt N}^{out}_{\tt Z}(S)= \emptyset$, then $S|_{\tt Z}\in\tilde{\cal T}^\bullet_{\tt Z}$ holds for it. In this case, for any tree $T\in\tilde{\cal T}^\bullet_{\tt Z}$, there exists a forest $Q\in\tilde{\cal F}^m$ such that $Q|_{\uparrow{\tt Z}} =Q|_{\tt Z}=T$.  

\end{theorem}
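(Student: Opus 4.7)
The plan is to bootstrap from Theorem 10, which already handles the levels $l=m+1,\ldots,k-1$, down to the boundary level $l=m$. The tools for doing so are Property 4 (existence of descendants, together with the remark that a vertex emitting an arc in an ancestor still emits one in any descendant) and the chain of equalities (\ref{calf}) supplied by Theorem 8, which ties $\tilde{\cal F}^m|_{\uparrow{\tt Z}}$ to $\tilde{\cal F}^{m+1}|_{\uparrow{\tt Z}}$. The only subtlety is that at level $m$ the convexity becomes strict again, so one cannot simply copy the previous step; one must verify that the outgoing-arc status from ${\tt Z}$ is transferred correctly in both directions.

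For part 1, first produce a forest in $\tilde{\cal F}^m$ with an outgoing arc from ${\tt Z}$: Theorem 10(1) applied at $l=m+1$ yields some $H'\in\tilde{\cal F}^{m+1}$ with ${\tt N}^{out}_{\tt Z}(H')\neq\emptyset$; by Property 4 this forest has a descendant $H\in\tilde{\cal F}^m$, and by the remark following Property 4, the vertex of ${\tt Z}$ from which an arc emanates in $H'$ still emits an arc in $H$. This already gives ${\tt N}^{out}_{\tt Z}(\tilde{\cal F}^m)\neq\emptyset$. To identify $H|_{\uparrow{\tt Z}}$ with an element of $\tilde{\cal T}^\circ_{\tt Z}$, I invoke the inclusion $\tilde{\cal F}^m|_{\uparrow{\tt Z}}\subseteq\tilde{\cal F}^{m+1}|_{\uparrow{\tt Z}}$ from (\ref{calf}): there is $\tilde F\in\tilde{\cal F}^{m+1}$ with $\tilde F|_{\uparrow{\tt Z}}=H|_{\uparrow{\tt Z}}$. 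Coincidence of outgoing restrictions forces ${\tt N}^{out}_{\tt Z}(\tilde F)={\tt N}^{out}_{\tt Z}(H)\neq\emptyset$, so Theorem 10(1) gives $\tilde F|_{\uparrow{\tt Z}}\in\tilde{\cal T}^\circ_{\tt Z}$, i.e.\ $H|_{\uparrow{\tt Z}}\in\tilde{\cal T}^\circ_{\tt Z}$. The ``in particular'' clause is then the same argument applied uniformly to every forest of $\tilde{\cal F}^m$.

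For part 2, given a forest $S\in\tilde{\cal F}^m$ with ${\tt N}^{out}_{\tt Z}(S)=\emptyset$, the inclusion in (\ref{calf}) again delivers $\tilde S\in\tilde{\cal F}^{m+1}$ with $\tilde S|_{\uparrow{\tt Z}}=S|_{\uparrow{\tt Z}}=S|_{\tt Z}$; in particular ${\tt N}^{out}_{\tt Z}(\tilde S)=\emptyset$, so Theorem 10(2) applies and $\tilde S|_{\tt Z}\in\tilde{\cal T}^\bullet_{\tt Z}$, whence $S|_{\tt Z}\in\tilde{\cal T}^\bullet_{\tt Z}$. For the realisation statement, I use arc replacement exactly as in the proof of Theorem 6: for any $T\in\tilde{\cal T}^\bullet_{\tt Z}$ set $Q=S^T_{\uparrow{\tt Z}}$; since ${\tt N}^{out}_{\tt Z}(T)=\emptyset$, Property 3(2) ensures $Q$ is a forest, and since neither $S$ nor $T$ contributes an arc leaving ${\tt Z}$, $Q$ has the same number of trees as $S$, so $Q\in{\cal F}^m$. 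The weight calculation
\begin{equation*}
\Upsilon^Q-\Upsilon^S=\Upsilon^Q_{\tt Z}-\Upsilon^S_{\tt Z}=\Upsilon^T-\lambda^\bullet_{\tt Z}=\lambda^\bullet_{\tt Z}-\lambda^\bullet_{\tt Z}=0
\end{equation*}
places $Q$ in $\tilde{\cal F}^m$, and by construction $Q|_{\uparrow{\tt Z}}=Q|_{\tt Z}=T$.

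The main obstacle, such as it is, is bookkeeping: one must consistently remember that equality of outgoing restrictions $F|_{\uparrow{\tt Z}}=F'|_{\uparrow{\tt Z}}$ also equates the sets ${\tt N}^{out}_{\tt Z}(F)$ and ${\tt N}^{out}_{\tt Z}(F')$, so that the ``has outgoing arc from ${\tt Z}$'' property is preserved when moving between levels via (\ref{calf}). Once this is fixed, the proof is a straightforward assembly of Theorem 10, Theorem 8, Property 4, and Property 3.
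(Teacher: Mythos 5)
Your proof is correct and follows essentially the same route as the paper's: establish ${\tt N}^{out}_{\tt Z}(\tilde{\cal F}^m)\neq\emptyset$ via minimal descendants (Property 4), transfer the classification of $H|_{\uparrow{\tt Z}}$ and $S|_{\tt Z}$ from level $m+1$ down to level $m$ through the inclusion $\tilde{\cal F}^m|_{\uparrow{\tt Z}}\subseteq\tilde{\cal F}^{m+1}|_{\uparrow{\tt Z}}$ of (\ref{calf}) combined with Theorem 10, and realize each $T\in\tilde{\cal T}^\bullet_{\tt Z}$ by the arc replacement $Q=S^T_{\uparrow{\tt Z}}$ exactly as in Theorem 6. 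The only cosmetic difference is that you produce $H$ by descending one level from a forest of $\tilde{\cal F}^{m+1}$ rather than descending all the way from the given $F\in\tilde{\cal F}^{k-1}$, and both versions leave implicit the same small point, namely that in a descendant the arc leaving ${\tt Z}$ cannot be rerouted so that all arcs emanating from ${\tt Z}$ stay inside ${\tt Z}$ (all $|{\tt Z}|$ vertices of ${\tt Z}$ emit arcs, so this would force a cycle).
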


\begin{figure}[h]
\unitlength=0.9mm
\begin{center}
\begin{picture}(147,35)

\put(0,32){$V$}
\put(4,5){\vector(1,0){17}}
\put(4,7){\vector(0,1){17}}
\put(21,7){\vector(-1,0){17}}
\put(21,25){\vector(-1,0){16}}
\put(5,15){\small 2}
\put(12,26){\small 1}
\put(12,1){\small 2}
\put(12,8){\small 2}
\put(1,5){$\odot$}
\put(21,5){$\odot$}
\put(1,24){$\odot$}
\put(21,24){$\odot$}
\put(5,9){$\tt Z$}
\put(20,9){$\tt L$}
\put(3,27){$\tt J$}
\put(20,27){$\tt I$}

\put(30,32){$H$}
\put(34,7){\vector(0,1){17}}
\put(51,25){\vector(-1,0){16}}
\put(51,7){\vector(-1,0){17}}
\put(35,15){\small 2}
\put(42,26){\small 1}
\put(42,8){\small 2}
\put(31,5){$\odot$}
\put(51,5){$\odot$}
\put(31,24){$\odot$}
\put(51,24){$\odot$}
\put(35,9){$\tt Z$}
\put(50,9){$\tt L$}
\put(33,27){$\tt J$}
\put(50,27){$\tt I$}

\put(60,32){$F$}
\put(81,25){\vector(-1,0){16}}
\put(64,7){\vector(0,1){17}}
\put(65,15){\small 2}
\put(72,26){\small 1}
\put(61,5){$\odot$}
\put(81,5){$\odot$}
\put(61,24){$\odot$}
\put(81,24){$\odot$}
\put(65,9){$\tt Z$}
\put(80,9){$\tt L$}
\put(63,27){$\tt J$}
\put(80,27){$\tt I$}
\put(43,26){\oval(25,13)}
\put(73,26){\oval(25,13)}

\put(90,32){$G$}
\put(111,6){\vector(-1,0){16}}
\put(111,25){\vector(-1,0){16}}
\put(102,8){\small 2}
\put(102,26){\small 1}
\put(91,5){$\odot$}
\put(111,5){$\odot$}
\put(91,24){$\odot$}
\put(111,24){$\odot$}
\put(95,9){$\tt Z$}
\put(110,9){$\tt L$}
\put(93,27){$\tt J$} 
\put(110,27){$\tt I$}
\put(103,26){\oval(25,13)}

\put(120,32){$G'$}
\put(131,7){$T$}
\put(125,6){\vector(1,0){16}}
\put(141,25){\vector(-1,0){16}}
\put(132,2){\small 2}
\put(132,26){\small 1}
\put(121,5){$\odot$}
\put(141,5){$\odot$}
\put(121,24){$\odot$}
\put(141,24){$\odot$}
\put(124,9){$\tt Z$}
\put(140,9){$\tt L$}
\put(123,28){$\tt J$}
\put(140,27){$\tt I$}
\put(133,7){\oval(25,13)}
\put(133,26){\oval(25,13)}

\end{picture} 
\caption{\small Depicted from left to right: graph $V$; forest $H$ constituting the set $\tilde{\cal F}^1={\cal F}^1$; forests $F$, $G$, and $G'$ constituting the set ${\tilde{\cal F}^2}$. Here $\phi^4=0$, $\phi^3=1$, $\phi^2=3$, $\phi^1=5$, and $\phi^0=\infty$ by definition. We have $\phi^0-\phi^1> \phi^1-\phi^2= \phi^2-\phi^3 >\phi^3-\phi^4$ ($m=1, \ k=3$). The labeled atoms that make up $\aleph^\bullet_3=\aleph^\bullet_2$ are the vertices ${\tt Z}$, ${\tt L}$ of the graph $V$ and the set $\{ \tt I,J \}$. 
The conditions of Theorem 11 are satisfied: ${\tt N}^{out}_{\tt Z}(F)\neq \emptyset$ and ${\tt N}^{out}_{\tt Z}(G)= \emptyset$. 
 Since $H$ is the only forest that makes up the set $\tilde{\cal F}^1$, and in it ${\tt N}^{out}_{\tt Z}(H)=\{ \tt J\}\neq \emptyset$, there is no forest $S\in\tilde{\cal F}^1$ such that ${\tt N}^{out}_{\tt Z}(S)=\emptyset$. 
In the forest $G'$, the tree $T\in\tilde{\cal T}_{\tt Z}^\circ$ (the tree with the only arc $(\tt Z,L)$) is additionally depicted. The set of trees $\tilde{\cal F}^1|_{\uparrow{\tt Z}}=\{ H\}$ does not contain this tree.}
\label{p5=}
\end{center}
\end{figure}
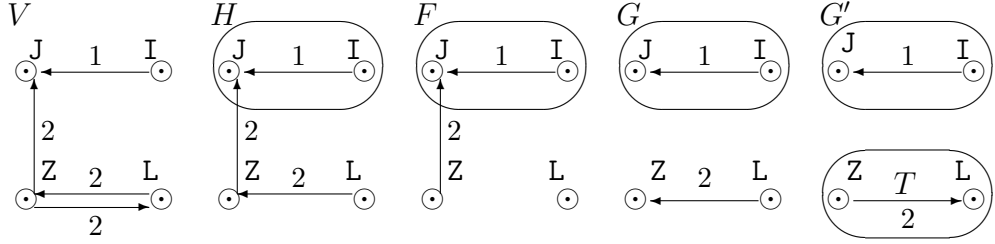

\begin{proof} 

Let us first verify that the case when ${\tt N}^{out}_{\tt Z}(H)= \emptyset$ holds for all $H\in\tilde{\cal F}^m$ is impossible. Indeed, by the condition, there exists $F\in\tilde{\cal F}^{k-1}$ such that ${\tt N}^{out}_{\tt Z}(F)\neq \emptyset$. By Property 4, it has minimal descendants and great-descendants $R\in\tilde{\cal F}^l$ for any $1\leq l<k-1$ (we assume that ${\cal F}^1\neq \emptyset$, although here it is sufficient that ${\cal F}^m\neq \emptyset$). And for all of them, ${\tt N}^{out}_{\tt Z}(R)\neq \emptyset$ holds. Therefore, for $R\in\tilde{\cal F}^m$, also ${\tt N}^{out}_{\tt Z}(R)\neq \emptyset$.

1) The fact that there exists $H\in\tilde{\cal F}^m$, for which ${\tt N}^{out}_{\tt Z}(H)\neq \emptyset$ is already clear to us. By Property 4, any forest in $\tilde{\cal F}^m$ has a relative in $\tilde{\cal F}^{m+1}$ and vice versa. From Theorem 10 we know that for any $P\in\tilde{\cal F}^{m+1} $, if 
${\tt N}^{out}_{\tt Z}(P)\neq \emptyset$, then $P|_{\uparrow{\tt Z}}\in \tilde{\cal T}^\circ_{\tt Z}$. 
By Theorem 8 the left inclusion (\ref{calf}) holds

\begin{equation}
\tilde{\cal F}^m|_{\uparrow{\tt Z}}\subseteq \tilde{\cal F}^{m+1}|_{\uparrow{\tt Z}} \ . 
\label{calf1}
\end{equation}
Therefore, for the forest $H$, $H|_{\uparrow{\tt Z}}\in \tilde{\cal T}^\circ_{\tt Z}$ holds. If, however, for 
for all $H\in\tilde{\cal F}^m$, ${\tt N}^{out}_{\tt Z}(H)\neq \emptyset$ holds, then this means that $\tilde{\cal F}^m|_{\uparrow{\tt Z}}\subseteq\tilde{\cal T}^\circ_{\tt Z}$. 

2) It follows from Theorem 10 and (\ref{calf1}) that if there exists a 
forest $S\in\tilde{\cal F}^m$ such that ${\tt N}^{out}_{\tt Z}(S)= \emptyset$, then for it, $S|_{\uparrow{\tt Z}}= S|_{\tt Z}\in\tilde{\cal T}^\bullet_{\tt Z}$. 

Now let $T\in\tilde{\cal T}^\bullet_{\tt Z}$ and let $S\in \tilde{\cal F}^m$ be such that ${\tt N}^{out}_{\tt Z}(S)= \emptyset$ (see Fig.\ref{p52}). Since ${\tt N}^{out}_{\tt Z}(T)=\emptyset$, by Property 3 the graph $Q={S}_{\uparrow{\tt Z}}^{T}$ is a forest. In it $\Upsilon^Q_{\overline{\tt Z}}=\Upsilon^S_{\overline{\tt Z}}$ and $\Upsilon^Q_{\tt Z}=\Upsilon^T= \lambda^\bullet_{\tt Z}$. Obviously, $Q\in{\cal F}^m$ and hence 

\begin{equation*}
0\leq \Upsilon^Q - \Upsilon^{S} =\Upsilon^Q_{\overline{\tt Z}}+\Upsilon^Q_{\tt Z} -(\Upsilon^{S}_{\overline{\tt Z}}+ \Upsilon^{S}_{\tt Z}) = \Upsilon^Q_{\tt Z}-\Upsilon^S_{\tt Z}= \lambda^\bullet_{\tt Z} -\lambda^\bullet_{\tt Z}  =0 \ .  
\end{equation*}
From this we conclude that $Q\in \tilde{\cal F}^m$, and by construction $Q|_{\uparrow{\tt Z}} =Q|_{\tt Z}=T$. 
\end{proof}

\begin{figure}[h]
\unitlength=0.75mm
\begin{center}
\begin{picture}(177,32)

\put(12,29){$V$}
\put(4,5){\vector(1,0){17}}
\put(4,8){\vector(0,1){15}}
\put(21,7){\vector(-1,0){16}}
\put(23,23){\vector(0,-1){15}}
\put(3,23){\vector(0,-1){15}}
\put(5,15){\small 2}
\put(0,15){\small 2}
\put(20,15){\small 2}
\put(12,0){\small 1}
\put(12,8){\small 1}
\put(1,5){$\odot$}
\put(21,5){$\odot$}
\put(1,24){$\odot$}
\put(21,24){$\odot$}
\put(5,22){$\tt J$}
\put(18,22){$\tt I$}
\put(20,0){$\tt L$}
\put(4,0){$\tt L'$}

\put(42,29){$H$}
\put(33,8){\vector(0,1){15}}
\put(53,23){\vector(0,-1){15}}
\put(51,6){\vector(-1,0){15}}
\put(34,15){\small 2}
\put(50,15){\small 2}
\put(42,1){\small 1}
\put(31,5){$\odot$}
\put(51,5){$\odot$}
\put(31,24){$\odot$}
\put(51,24){$\odot$}
\put(50,0){$\tt L$}
\put(34,0){$\tt L'$}
\put(35,22){$\tt J$}
\put(48,22){$\tt I$}
\put(42,8){\tt Z}
\put(43,6){\oval(26,14)}

\put(72,29){$Q$}
\put(83,23){\vector(0,-1){15}}
\put(63,23){\vector(0,-1){15}}
\put(65,6){\vector(1,0){16}}
\put(60,15){\small 2}
\put(72,1){\small 1}
\put(80,15){\small 2}
\put(61,5){$\odot$}
\put(81,5){$\odot$}
\put(61,24){$\odot$}
\put(81,24){$\odot$}
\put(65,22){$\tt J$}
\put(78,22){$\tt I$}
\put(80,0){$\tt L$}
\put(64,0){$\tt L'$}
\put(72,8){$T'$}
\put(73,6){\oval(26,14)}

\put(102,29){$S$}
\put(93,23){\vector(0,-1){15}}
\put(113,23){\vector(0,-1){15}}
\put(111,6){\vector(-1,0){16}}
\put(90,15){\small 2}
\put(110,15){\small 2}
\put(102,1){\small 1}
\put(91,5){$\odot$}
\put(111,5){$\odot$}
\put(91,24){$\odot$}
\put(111,24){$\odot$}
\put(95,22){$\tt J$} 
\put(108,22){$\tt I$}
\put(110,0){$\tt L$}
\put(94,0){$\tt L'$}
\put(102,8){$T$}
\put(103,06){\oval(26,14)}

\put(132,29){$F$}
\put(123,8){\vector(0,1){15}}
\put(141,6){\vector(-1,0){15}}
\put(124,15){\small 2}
\put(132,1){\small 1}
\put(121,5){$\odot$}
\put(141,5){$\odot$}
\put(121,24){$\odot$}
\put(141,24){$\odot$}
\put(140,0){$\tt L$}
\put(124,0){$\tt L'$}
\put(125,22){$\tt J$}
\put(138,22){$\tt I$}
\put(132,8){$T$}
\put(133,6){\oval(26,14)}

\put(162,29){$G$}
\put(173,23){\vector(0,-1){15}}
\put(171,6){\vector(-1,0){15}}
\put(170,15){\small 2}
\put(162,1){\small 1}
\put(151,5){$\odot$}
\put(171,5){$\odot$}
\put(151,24){$\odot$}
\put(171,24){$\odot$}
\put(170,0){$\tt L$}
\put(154,0){$\tt L'$}
\put(155,22){$\tt J$}
\put(168,22){$\tt I$}
\put(162,8){$T'$}
\put(163,6){\oval(26,14)}
\end{picture} 
\caption{\small Depicted from left to right are: graph $V$; forests $H$, $Q$, and $S$, which form the set $\tilde{\cal F}^1={\cal F}^1$; forests $F$ and $G$, which form the set $\tilde{\cal F}^2$. The labeled atoms, which form $\aleph^\bullet_3=\aleph^\bullet_2$, are the vertices ${\tt I}$, ${\tt J}$ of graph $V$ and the set $\tt Z=\{ L,L' \}$. Here $\phi^4=0$, $\phi^3=1$, $\phi^2=3$, $\phi^1=5$, and $\phi^0=\infty$ by definition. We have: $\phi^0-\phi^1>\phi^1-\phi^2=\phi^2-\phi^3> \phi^3-\phi^1$ ($m=1, \ k=3$). The conditions of Theorem 11 are satisfied: ${\tt N}^{out}_{\tt Z}(F)\neq \emptyset$ and  ${\tt N}^{out}_{\tt Z}(G)= \emptyset$, also ${\tt N}^{out}_{\tt Z}(S)= \emptyset$ (as in $Q$). The set of trees $\tilde{\cal T}_{\tt Z}^\bullet$ consists of two trees $T$ and $T'$, each having one single arc: in tree $T$ --- arc $\tt (L,L')$, in tree $T'$ --- $(\tt L',L)$. Forests $S$ and $Q$ contain one of these trees each. In these forests, the arc does not originate from $\tt Z$. But there must be a forest of $\tilde{\cal F}^1$ (a forest of $H$) in which an arc emanates from ${\tt Z}$. The set $\tilde{\cal T}^\circ_{\tt Z}$ consists of one tree in which there are two arcs. These are the arcs $({\tt L},{\tt L'})$ and $({\tt L'},{\tt J})$. Done $\tilde{\cal F}^m|_{\uparrow{\tt Z}}= \{H |_{\uparrow{\tt Z}} \}=\tilde{\cal T}^\circ_{\tt Z}$.}
\label{p52}
\end{center}
\end{figure}
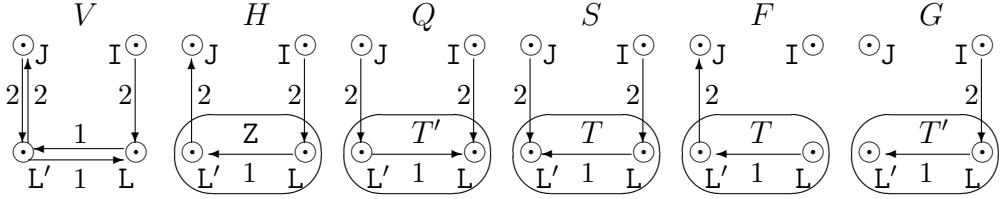

Note that Theorem 11 cannot be strengthened. For point 1), it is indeed possible that for all $H\in\tilde{\cal F}^m$ ${\tt N}^{out}_{\tt Z}(H)\neq \emptyset$ holds (see Fig.\ref{p5=}). Moreover, it cannot be asserted that if for all $H\in\tilde{\cal F}^m$ ${\tt N}^{out}_{\tt Z}(H)\neq \emptyset$ holds, then $\tilde{\cal F}^m|_{\uparrow{\tt Z}} =\tilde{\cal T}^\circ_{\tt Z}$. A situation is possible when there exists $T\in\tilde{\cal T}^\circ_{\tt Z}$ such that $T\notin\tilde{\cal F}^m|_{\uparrow{\tt Z}}$ (see the same figure).

For point 2), a variant is possible when there is no $S\in\tilde{\cal F}^m$ for which ${\tt N}^{out}_{\tt Z}(S)= \emptyset$, which is again shown by the same figure.

Fig.\ref{p52} shows a variant when there is a forest $S$ for which ${\tt N}^{out}_{\tt Z}(S)= \emptyset$. In this case, one of the situations of point 1) occurs, when $\tilde{\cal F}^m|_{\uparrow{\tt Z}}=\tilde{\cal T}^\circ_{\tt Z}$.

Theorems 8–11 allow us to formulate a conclusion similar to Conclusion 1. 
\begin{conclusion}
Let (\ref{nequal}) hold and let our knowledge of the original graph $V$ be limited to the fact that we know the partition $\aleph_k$ and all trees from the sets $\tilde{\cal T}_{\tt Y}^\bullet$ and $\tilde{\cal T}_{\tt Y}^\circ$ for any ${\tt Y}\in\aleph_k$.
Then we can find all forests $\tilde{\cal F}^l$ for $l=m,m+1,\ldots,k$. Thus, in particular, the set of forests $\tilde{\cal F}^m$ is known. And this means that the new algebra $\mathfrak{A}_m$ can be defined (the previous ones from $m+1$ to $k$ coincide), and all its labeled and unlabeled atoms ${\tt Z'}\in\aleph_m^\bullet$ and ${\tt X'}\in\aleph_m^\circ$, and the sets $\tilde{\cal T}_{\tt Z'}^\bullet$ and $\tilde{\cal T}_{\tt X'}^\circ$. But the sets $\tilde{\cal T}_{\tt Z'}^\circ$ are not known. And this does not allow us to build forests of $\tilde{\cal F}^{m-1}$ without using additional information. Our knowledge ends with the fact that by Theorem 3 we know that $ \tilde{\cal F}^{m-1}|_{\uparrow{\tt X}'}\subseteq \tilde{\cal T}^\circ_{\tt X'}$, for ${\tt X'}\in\aleph_m^\circ$. Thus, for $F\in\tilde{\cal F}^{m-1}$ we know not only that $F|_{\uparrow{\tt X'}}$ is a tree, but we also know the tree itself. This means that we know, in particular, that for those ${\tt Y}\in\aleph_k$ that are contained in ${\tt X'}$, $F|_{\uparrow{\tt Y}}$ is a tree, and we also know this tree. 

Furthermore, from Property 15, in particular, it follows that for $F\in\tilde{\cal F}^{m-1}$ the induced subgraph $F|_{\tt Z'}$ is a tree for ${\tt Z'}\in\aleph_m^\bullet$. But we do not know at all what kind of tree this is. And the reason for this is that for those ${\tt Y}\in\aleph_k$ that are contained in ${\tt Z'}$, the graph $F|_{\uparrow{\tt Y}}$ may (and the examples considered show this --- see Fig.\ref{atom},\ref{atomt}) not be a tree. The tree structure of these atoms (atoms of the algebra $\mathfrak{A}_k$) is destroyed in forests of $\tilde{\cal F}^{m-1}$. 
Therefore, even if we supplement our knowledge with all the arcs of the original graph whose ends belong to different atoms of the algebra $\mathfrak{A}_k$, this will not help us in constructing forests from $\tilde{\cal F}^{m-1}$. For such construction, we need to additionally know not the above-mentioned arcs, but all the sets of trees $\tilde{\cal T}_{\tt Z'}^\circ$ for ${\tt Z'}\in\aleph_m^\bullet$. And then you can use all the proven theorems, but starting in descending order not from index $k$, but from index $m$.  

\end{conclusion}
\section{About Markov chains} 

The intensity matrix $\bf A$ of a continuous-time Markov chain (the infinitesimal operator of the semigroup of transition probability matrices ${\bf P}(t)$) is, up to a sign, the Laplace matrix ${\bf L}=-{\bf A}$ \cite{V3} of the weighted directed graph $A$ without loops corresponding to this chain. 
In the presence of process killing, loops are present and the Laplace matrix is undefined, but the loops can be eliminated by introducing an additional killing state, which allows the Laplace matrix to be introduced in this case as well  \cite{V4}. 

The coefficients at the degrees $\lambda^l$ of the characteristic polynomial and the coefficients at the same degrees $\lambda^l$ of the components of the corresponding eigenvectors of the Laplacian represent an unsigned sum over spanning forests ${\cal F}^l(A)$ \cite{V3}-\cite{V5}.  

Let the transition densities be exponentially small \cite{VF} $a_{ij}\thicksim \exp(-v_{ij}/\varepsilon)$. This is a general situation in low-temperature physics \cite{VP}. The unsignedness of the sums at powers of the spectral parameter $\lambda$ allows us to follow in the asymptotics only the terms of maximum order \cite{V1,V2,V}. The maximum order is reached when the sum of the corresponding values of $v_{ij}$ is minimal. This corresponds to the sum of products of matrix elements over minimal forests $F\in\tilde{\cal F}^l$ of graph $V$. And the arcs of forests from $\tilde{\cal F}^l$ can be grouped according to whether their outcomes belong to the atoms of the algebra $\mathfrak{A}_l$. This corresponds to trees from the sets $\tilde{\cal T}^{\bullet}_{\tt Y}$ and $\tilde{\cal T}^{\circ}_{\tt Y}$ for ${\tt Y}\in\aleph_l$.   

%\newpage
%\begin{center}

%\newpage

\centerline{Abstract}

\begin{center}{How Trees on Atoms of Subset Algebras Define Minimal Forests and Their Growth}
\end{center}

\centerline{Buslov V.A.}

\parbox[t]{12cm}
{\small A complete description is given of how minimal trees on atoms of the algebra of subsets $\mathfrak{A}_k$ generated by minimal spanning $k$-component forests of a weighted digraph $V$ determine the form of these forests and how forests grow with increasing number of arcs (that is with a decrease in the number of trees). Precise bounds are established on what can be extracted about the tree structure of the original graph if the minimal trees on the atoms of a single algebra $\mathfrak{A}_k$ are known, and also what minimum spanning forests with fewer components can be constructed based on this, and what exactly additional information is required to determine minimum spanning forests consisting of even fewer components.
}
\vspace{0.5cm}

St. Petersburg State University, Faculty of Physics, Department of Computational Physics

198504 St. Petersburg, Old Peterhof, st. Ulyanovskaya, 3

Email: abvabv@bk.ru, v.buslov@spbu.ru
\end{document}